\def\hc{\mbox{HC}}
\def\chaa{C_{\omega_1\omega}(\aaq)}
\def\Col{\mathop{\rm Col}}
\def\Lim{\mathop{\rm Lim}}
\def\Tr{{\mathop{\rm Tr}}}
\def\tr{\mathop{{\rm Tr}}}
\def\rud{\mbox{rud}}
\def\zfc{\mbox{{ZFC}}}
\def\zf{\mbox{{ZF}}}
\def\CAA{C(\hspace{-1pt}\aaq\hspace{-1pt})}
\def\aaq{\mathop{{\mbox{\tt a\hspace{-0.4pt}a}}}}
\def\aaqs{\mathop{\mbox{\tt a\hspace{-0.4pt}a\hspace{0.6pt}$s$\hspace{0.6pt}}}}
\def\MM{\mbox{\rm MM}}
\def\restriction{\upharpoonright}
\def\rest{\upharpoonright}
\def\bx{\vec{x}}
\def\bt{\vec{t}}
\def\ba{\vec{a}}
\def\Pw{{\cal P}}
\def\On{{\rm On}}
\newcommand{\F}{{\cal F}}
\mathchardef\bfSigma="0606 \mathchardef\bfPi="0605
\mathchardef\bfDelta="0601
\def\rest{\mathord{\restriction}} % This is from AMS symbols,
\def\rest{\mathord{\restriction}}
\newcommand{\open}{\Bbb}
\newcommand{\oF}{{\open F}}
\newcommand{\oP}{{\open P}}
\newcommand{\oR}{{\open R}}
\newcommand{\oU}{{\open U}}
\newcommand{\oT}{{\open T}}
\def\bracketdown#1{\mathop{\vbox{\ialign{##\crcr\noalign{\kern2\p@}
\downbracketfill\crcr\noalign{\kern2\p@\nointerlineskip}
$\hfil\displaystyle{#1}\hfil$\crcr}}}\limits}
\def\bracketup#1{\mathop{\vbox{\ialign{##\crcr\noalign{\kern1\p@}
\upbracketfill\crcr\noalign{\kern1\p@\nointerlineskip}
$\hfil\displaystyle{#1}\hfil$\crcr}}}\limits}
\def\upbracketfill{$\m@th
\makesm@sh{\llap{\vrule\@height2\p@\@width.4\p@}}%
\leaders\vrule\@height.4\p@\hfill
\makesm@sh{\rlap{\vrule\@height2\p@\@width.4\p@}}$}
\def\downbracketfill{$\m@th
\makesm@sh{\llap{\vrule\@height.4\p@\@depth1.6\p@\@width.4\p@}}%
\leaders\vrule\@height.4\p@\hfill
\makesm@sh{\rlap{\vrule\@height.4\p@\@depth1.6\p@\@width.4\p@}}$}
\def\CH{\text{CH}}
\def\GCH{\text{GCH}}
\def\\(\two\){Eloise}
\def\two{\mbox{$\mathbf{II}$}}
\def\phi{\varphi}
\def\hod{\mbox{\rm HOD}}
\newcommand{\psfrag}[2]{}
\DeclareMathAlphabet{\mathpzc}{OT1}{pzc}{m}{it}
 \theoremstyle{plain}% default
  \newtheorem{theorem}{Theorem}[section]
  \newtheorem{lemma}[theorem]{Lemma}
  \newtheorem{corollary}[theorem]{Corollary}
  \newtheorem{conjecture}[theorem]{Conjecture}
  \theoremstyle{definition}
  \newtheorem{definition}[theorem]{Definition}
  \theoremstyle{remark}
 \newtheorem*{claim}{Claim}
 \newtheorem*{subclaim}{Subclaim}
\def\LL{\mathcal{L}}
\def\cto{C^2(\omega)}
\def\ctoaa{C^2(\omega,\aaq)}
\def\ctop{C^{2,+}(\omega)}
\begin{document}
%

%\LARGE

\author{Menachem Magidor\\ Jerusalem \and Jouko V\"a\"an\"anen\\ Helsinki and Amsterdam}

\title{New inner models from second order logics.\thanks{We are grateful to the American Institute of Mathematics for its support. This project has also received  funding from the European Research Council (ERC) under the
European Union’s Horizon 2020 research and innovation programme (grant agreement No
101020762).}
}
  
\maketitle

\begin{abstract}
    $\mathcal{L}$.We define a new inner  model $\cto$ based on the fragment of second order logic in which second order variables range over countable subsets of the domain. We compare $\cto$ to the previously studied inner model $\CAA$. We argue that $\cto$ appears to be a much bigger inner model than $\CAA$, although this cannot be literally true in ZFC alone. However, we conjecture that it follows from large cardinal assumptions. For example, assuming large cardinals, $\cto$ contains, for every $n$, an inner model with $n$ Woodin cardinals, while $\CAA$ contains, under the same assumption, no inner model with a Woodin cardinal. As to large cardinals in $\cto$, we show that, assuming a  Woodin limit of Woodin cardinals, the cardinal $\omega_1^V$ is Mahlo in $\cto$. A stronger result is proved for the combination $\ctoaa$ of $\CAA$ and $\cto$. We also show that the question whether $\hod_1$, a variant of $\hod$, defined in \cite{MR4290501}, is the same as $\hod$ cannot be decided on the basis of ZFC even if we add the assumption that there are supercompact cardinals.  
\end{abstract}

\section{Introduction}

Suppose  $\mathcal{L}$ is some extension of first order logic, as in  \cite{BF}.  For example, $\mathcal{L}$ could be an extension of first order logic by a generalized quantifier or by higher order quantification. The inner model $C(\mathcal{L})$, introduced in \cite{MR4290501}, obtains if in the definition of G\"odel's constructible hierarchy $L$ definability in first order logic is replaced by definability in the logic $\mathcal{L}$. For the Axiom of Choice to be true in the inner model $C(\mathcal{L})$ we have to make a further modification, essentially adding reference to the truth definition of $\mathcal{L}$, see Section~\ref{cto}. The most investigated new inner models  of  the type $C(\mathcal{L})$ so far have been $C(\mathcal{L}(Q^{cof}_\omega))$ (\cite{MR4290501}), denoted $C^*$, arising from the so-called cofinality quantifier $Q^{cof}_\omega$, and 
$C(\mathcal{L}(\aaq))$ (\cite{KMV2}), denoted $\CAA$, arising from the so-called stationary logic. We refer to \cite{MR4290501,KMV2,BF} for definitions of these logics.

If we build the inner model $C(\mathcal{L})$ from second-order logic
$\mathcal{L}^2$, obtaining $C(\mathcal{L}^2)$, we get exactly $\hod$, as proved in \cite{MR0281603}. If we build the inner model $C(\mathcal{L})$ from the infinitary logic $\mathcal{L}_{\omega_1\omega}$, we obtain\footnote{Formulas of $\mathcal{L}_{\omega_1\omega}$ are allowed to have a countable sequence of free variables.} the \emph{Chang model} $C(\mathcal{L}_{\omega_1\omega})$, denoted $C_{\omega_1\omega}$, i.e. the smallest transitive model of $\zf$ containing all the ordinals and closed under countable sequences \cite{MR280357}.

In this paper we consider two \emph{fragments} of second order logic and the inner models they give rise to. 
The first fragment   is $\mathcal{L}^2_\omega$, second order logic in which the bound second order variables range over countable (rather than all) subsets and relations on the domain. It is important to note that these subsets and relations are chosen from $V$ and assumed to be countable in $V$, and they need not be elements of the domain, nor countable in the sense of the domain. The logic $\mathcal{L}^2_\omega$ is notoriously non-axiomatizable and incompact, and in that respect quite far from $\mathcal{L}(Q^{cof}_\omega)$ and from stationary logic $\mathcal{L}(\aaq)$. Still, in our set theoretical approach, $\mathcal{L}^2_\omega$ behaves similarly as $\mathcal{L}(Q^{cof}_\omega)$ and $\mathcal{L}(\aaq)$.

The fragment $\mathcal{L}^2_\omega$ gives rise to an inner model 
\begin{equation}\label{ctodef}
    \cto=C(\mathcal{L}^2_\omega).
\end{equation}
Obviously $C^*\subseteq\cto$ but it is less clear what the relationship between $\cto$ and $\CAA$ is. In Section~\ref{cto} we draw similarities and differences between $\cto$ and $\CAA$. The upshot of the results is that $\cto$ appears to be, in the presence of sufficiently large cardinals, a ``stronger" inner model than $\CAA$. An  indication of the strength of $\cto$ is the fact that e.g. $M_1^\sharp$ is in $\cto$ but not in $\CAA$ (assuming a proper class of Woodin cardinals).

We consider also the combination
$C^2(\omega,\aaq)$ of $C(\aaq)$ and $\cto$, defined as $C(\mathcal{L}^2_\omega(aa))$, where $\mathcal{L}^2_\omega(aa)$ is the extension of $\mathcal{L}^2_\omega$ by the $aa$-quantifier of stationary logic. We show that some of the properties of $\CAA$ hold also for the potentially bigger inner model $C^2(\omega,\aaq)$, assuming a proper class of Woodin limits of Woodin cardinals.

The second fragment of second order logic we consider is  \emph{existential second order logic} $\Sigma^1_1$ consisting of formulas of the form
$\exists R_1\ldots\exists R_n\phi,$
where $\phi$ is first order and $R_1,\ldots,R_n$ are second order relation variables. The resulting inner model $$\hod_1=C(\Sigma^1_1),$$ which extends e.g. $C^*$, was already considered in \cite{MR4290501}. This model is the topic of Section~\ref{hod-one} where the main result says that the equation $\hod_1=\hod$ cannot be decided by large cardinal axioms.

\section{The inner model $C^2(\omega)$.\label{cto}}

We have defined in (\ref{ctodef}) above the inner model
    $\cto$.
    Clearly, $\cto$ satisfies $\zf$. For $\cto$ to satisfy $AC$ we   need to fold in the truth-definition in the definition  of $\cto$, i.e. we use the ``new" definition of the inner model $C(\mathcal{L})$ introduced in \cite{KMV2}, as it is not clear whether $\cto$ is adequate to truth in itself in the sense that would warrant the use of the older definition of \cite{MR4290501}. If we used the  definition of $C(\mathcal{L}^2_\omega)$ as in \cite{MR4290501}, we do not know whether the inner model satisfies the Axiom of Choice. 
Let $\Lim$ be the class of limit ordinals.  We define the hierarchy $(J'_\alpha)$, $\alpha\in\Lim$,
and the class $\Tr$,  by  transfinite double induction,  as follows:
$$\Tr=\{(\alpha,\phi(\ba)): (J'_\alpha,\in,\Tr\rest \alpha)\models\phi(\ba), \phi(\bx)\in\mathcal{L}^2_\omega,\ba\in J'_\alpha, \alpha\in\Lim\},$$
where $\Tr\rest \alpha=\{(\beta, \psi(\ba))\in \Tr:\beta\in \alpha\cap\Lim\},$
and
\def\arraystretch{1.3}
$$\label{J-hier}
\left\{\begin{array}{lcl}
J'_{0}&=&\emptyset\\
J'_{\alpha+\omega}&=&\rud_{\Tr}(J'_\alpha\cup\{J'_\alpha\})\\
J'_{\omega\nu}&=&\bigcup_{\alpha<\nu}J'_{\omega\alpha},\mbox{ for }\nu\in\Lim.
\end{array}\right.
$$
\def\arraystretch{1}
Here the rudimentary closure operation $\rud_{\Tr}$ includes the operation $x\mapsto x\cap \Tr$.
 The inner model $\cto$ is the class $\bigcup_{\alpha\in\Lim}J'_\alpha$. The inner model $\ctoaa$ is defined similarily.

 Instead of countable subsets, we can define $\cto$ by reference to countable sequences, and we obtain the same model.
 
 The  following inclusions  (in ZFC) are trivial: 
\begin{enumerate}
\item $C^*\subseteq \cto\subseteq C^2(\omega,\aaq)\subseteq\hod$.
\item $C^*\subseteq \CAA\subseteq C^2(\omega,\aaq)$.
\end{enumerate}

 The following questions about $\cto$ immediately suggest themselves, both consistently and under large cardinal assumptions:
 \begin{enumerate}
\item[Q1:] Does $\cto$ satisfy $\GCH$?
\item[Q2:] What is the relationship between $\cto$ and $\CAA$?
\item[Q3:] Is $\cto$ forcing absolute?
\item[Q4:] What large cardinals exist in $\cto$?

\end{enumerate} Similar questions can be posed also for $C^2(\omega,\aaq)$. 
We give partial answers to all four questions.

Let us first note the following rather simple fact, related to Q1, which still has interesting consequences:

\begin{theorem}
For any $\kappa\ge 2$, $|\mathcal{P}(\kappa)\cap \cto|\le (\kappa^\omega)^+$.
\end{theorem}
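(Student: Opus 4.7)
The strategy is a condensation argument of the type used to show $|\mathcal{P}(\kappa)\cap L|\le\kappa^+$, adapted to handle the second-order quantifiers of $\mathcal{L}^2_\omega$ ranging over countable subsets. Given $X\in\mathcal{P}(\kappa)\cap\cto$, fix the least $\alpha\in\Lim$ with $X\in J'_\alpha$. Using the definable well-ordering inherited from the $J'$-construction and the fact that $\mathcal{L}^2_\omega$ has only countably many formulas (so only countably many Skolem functions), I would build a Skolem hull $H\prec_{\mathcal{L}^2_\omega}(J'_\alpha,\in,\Tr\rest\alpha)$ by iterating through $\omega_1$-many stages the closure under the $\mathcal{L}^2_\omega$-Skolem functions together with the operation that adjoins, for each countable sequence drawn from the current stage, its graph. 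At each stage the cardinality is bounded by $(\kappa^\omega)^\omega=\kappa^\omega$, and a cofinality argument shows that the resulting $H$ is closed under countable sequences from $V$. Thus $|H|\le\kappa^\omega$, $\kappa\cup\{X\}\subseteq H$, and $H^\omega\cap V\subseteq H$.

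The main step is a condensation lemma: if $\pi\colon H\to N$ is the Mostowski collapse, then $N=(J'_\beta,\in,\Tr\rest\beta)$ for some $\beta\in\Lim$ with $|\beta|\le\kappa^\omega$. As in Jensen's classical argument, the rudimentary structure of the $J'$-hierarchy and the use of $\rud_{\Tr}$ are absolute under the transitive collapse. The novel point is that $\mathcal{L}^2_\omega$-truth must also transfer through $\pi$, so that the collapsed $\Tr$-predicate agrees with the genuine $\Tr$ on $N$. This is precisely where $\omega$-closure of $H$ is used: every countable subset of $N$ in $V$ is the $\pi$-image of a countable subset of $H$ in $V$, which by closure already lies in $H$ and is therefore represented, via $\pi$, inside $N$. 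Hence the second-order quantifier of $\mathcal{L}^2_\omega$ ranges in $N$ over exactly the $\pi$-images of the sets over which it ranges in $H$, so $\pi$ is $\mathcal{L}^2_\omega$-elementary and $\Tr$ transfers correctly. Because $\kappa\subseteq H$, the collapse is the identity on $\kappa\cup\{X\}$, so $X\in J'_\beta$ with $\beta<(\kappa^\omega)^+$, and
$$|\mathcal{P}(\kappa)\cap\cto|\le\Bigl|\bigcup_{\beta<(\kappa^\omega)^+}J'_\beta\Bigr|\le(\kappa^\omega)^+\cdot\kappa^\omega=(\kappa^\omega)^+.$$

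The main obstacle is the condensation lemma itself, specifically verifying that $\mathcal{L}^2_\omega$-elementarity combined with $\omega$-closure forces $\pi[H]$ to be an actual initial segment of the $J'$-hierarchy rather than merely some transitive $\mathcal{L}^2_\omega$-elementary substructure of $J'_\alpha$. This depends essentially on the design choice made in Section~\ref{cto} of folding $\Tr$ into the rudimentary closure operation $\rud_{\Tr}$, which ensures that the $J'$-hierarchy is canonical enough to be recognized after collapse, and on verifying that iterating Skolem closure jointly with $\omega$-sequence closure does not blow the cardinality past $\kappa^\omega$.
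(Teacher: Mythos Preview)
There is a genuine gap in the step where you assert $H^\omega\cap V\subseteq H$. Your hull $H$ is, by construction, a subset of $J'_\alpha$; but there is no reason for $J'_\alpha$ (or indeed for $\cto$ itself) to contain the countable sequences from $V$ that the second-order quantifiers of $\mathcal{L}^2_\omega$ range over. Recall that $\cto\subseteq\hod$, and $\hod$ is typically not closed under $\omega$-sequences from $V$; in the situations of interest (e.g.\ when $\cto=L$ after adding a Cohen real) the generic countable sequences are precisely what $\cto$ lacks. Hence ``adjoining, for each countable sequence drawn from the current stage, its graph'' already throws you out of $J'_\alpha$, and no $H\subseteq J'_\alpha$ can satisfy $H^\omega\subseteq H$. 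Without this closure, your transfer of $\mathcal{L}^2_\omega$-truth through the collapse breaks down, and with it the condensation lemma $N=J'_\beta$.

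The paper's proof sidesteps this by taking the hull not inside $J'_\alpha$ but inside $H_\mu$ for a regular $\mu$ large enough to see the relevant initial segment of the $J'$-construction. Since $H_\mu$ \emph{is} closed under countable sequences, one can build an $\omega_1$-chain $M_\alpha\prec_{\mathcal{L}^2_\omega}H_\mu$ with $|M_\alpha|\le\kappa^\omega$ and $M_\alpha\in M_{\alpha+1}$, so that the union $M$ collapses to a transitive $N$ with $N^\omega\subseteq N$. The argument is then not a condensation lemma asserting $N=J'_\beta$, but an absoluteness statement: because $N$ is transitive and $\omega$-closed, one checks by induction that $(J'_\gamma)^N=(J'_\gamma)^V$ for all $\gamma<\zeta:=N\cap\On$, so the given subset of $\kappa$ already appears in $J'_\zeta$ with $|\zeta|\le\kappa^\omega$. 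Your intuition that $\omega$-closure is the key was correct; the missing move is to take the hull in an ambient structure rich enough to make that closure attainable.
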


\begin{proof}
This is as the proof of Theorem 5.19 in \cite{MR4290501}. Let $(J'_\alpha)_{\alpha\in\On}$ be the hierarchy defining $\cto$ (see~\cite{KMV2}). Suppose $a\subseteq\kappa$ and $a\in J'_\xi$. Let $\mu$ be an uncountable regular cardinal bigger than $\xi$ and $\kappa$. Let $(M_\alpha)_{\alpha<\omega_1}$ be a chain of models such that 
\begin{enumerate}
\item $M_0\models a\in \cto$.
\item $\kappa\cup\{a\}\subseteq M_0$.
\item $|M_\alpha|\le\kappa^\omega$.
\item $M_\alpha\cup\{M_\alpha\}\subseteq M_{\alpha+1}$.
\item $M_\alpha\preccurlyeq_{\mathcal{L}^2(\omega)}H_\mu$.
\item $M_\nu=\bigcup_{\alpha<\nu}M_\alpha$ for limit $\nu$.
\end{enumerate}
For item 5 we use the Downward L\"owenheim-Skolem Theorem of $\mathcal{L}_{\omega_1\omega_1}$
(\cite{MR539973}) and the fact that  $\mathcal{L}^2_\omega$ is a sublogic of $\mathcal{L}_{\omega_1\omega_1}$.
Let $M=\bigcup_{\alpha<\omega_1}M_\alpha$ and $\pi:N\cong M$ with $N$ transitive. Note that $\pi^{-1}(a)=a\in N$ and $N^\omega\subseteq N$. Let $\zeta=N\cap\On$. It is easy to see by induction on $\alpha$ that $(J'_\alpha)^N=(J'_\alpha)^V$ for $\alpha<\zeta$. Hence $a\in J'_\alpha$ for some $\alpha<\zeta$. Since $|\zeta|\le \kappa^\omega$, $|J'_\zeta|\le
\kappa^\omega$.
\end{proof}

The interest of the above theorem from the point of view of question Q1 is rather limited as in large cardinal scenarios it is likely that $(\kappa^\omega)^+$ is quite large compared to  $2^\kappa$ of $\cto$. Conceivably, but we do not know how to prove this at the moment, $(\kappa^\omega)^+$ is even measurable in $\cto$ (but see Theorem~\ref{measurable} below). However, we can now say something about Q2: Recall that $|\mathcal{P}(\kappa)\cap C^* |\le (\kappa\cdot \omega_1)^+$ (\cite[Theorem 5.20]{MR4290501}) and that there is no provable upper bound (in $\zfc$) for $|\mathcal{P}(\kappa)\cap \CAA |$ (\cite[Theorem 3.6]{KMV2}).
We can have consistently, relative to the consistency of $\zf$,  $|\mathcal{P}(\aleph_1)\cap \CAA|=\aleph_3$, while at the same time $(\aleph_1)^{\CAA}=\aleph_1$ and  (by the above) $|\mathcal{P}(\aleph_1)\cap \cto|\le \aleph_2$. Hence in this forcing extension $\CAA\nsubseteq\cto$.
We can make the conclusion even stronger:

\begin{theorem}
 Con(ZF) implies Con($\cto\subsetneq\CAA$).
\end{theorem}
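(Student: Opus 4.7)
The plan is to force over $L$ with a $\sigma$-closed partial order $\mathbb{P}\in L$ that places a new subset of $\omega_2$ into $\CAA$ of the extension while leaving $\cto$ equal to $L$; then $\cto\subseteq L\subseteq\CAA$ in the generic extension, with strict inclusion. The point is a clean dichotomy: $\sigma$-closed forcing adds no new countable subsets of any ground-model set, hence cannot affect $\cto$, whereas it can freely manipulate the stationary ideal on $\{\alpha<\omega_2:\cof(\alpha)=\omega\}$ and thereby enlarge $\CAA$.

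The first step is an absoluteness lemma: if $\mathbb{P}$ is $\sigma$-closed and $G$ is $\mathbb{P}$-generic over $V$, then $\cto^{V[G]}=\cto^V$. I would prove this by simultaneous induction on $\alpha$ showing $(J'_\alpha)^{V[G]}=(J'_\alpha)^V$ and $\Tr^{V[G]}\rest\alpha=\Tr^V\rest\alpha$. The only nontrivial point is preservation of $\mathcal{L}^2_\omega$-satisfaction over structures lying in $V$: the bound second-order variables of $\mathcal{L}^2_\omega$ range in the metatheory over countable subsets of the domain, and $\sigma$-closed forcing adds no new $\omega$-sequences of ordinals, so the range of those quantifiers over a $V$-structure is the same in $V$ and in $V[G]$. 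Rudimentary closure in the predicate $\Tr$ is absolute, so the induction closes.

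The second step is to build a $\sigma$-closed $\mathbb{P}\in L$ which introduces some new set $A\subseteq\omega_2$ into $\CAA^{L[G]}$. Fix in $L$ a partition $\langle S_\xi:\xi<\omega_2\rangle$ of $\{\alpha<\omega_2:\cof(\alpha)=\omega\}$ into stationary sets, and use a countable-support iteration of the $\sigma$-closed club-shooting forcings ``shoot a club through $\omega_2\setminus S_\xi$'' for those $\xi$ selected by a generic predicate $A\subseteq\omega_2$. Under GCH in $L$ one verifies, as in the argument behind \cite[Theorem~3.6]{KMV2}, that the iteration is $\sigma$-closed of size $\aleph_3$, preserves $\omega_1$ and $\omega_2$, and keeps $S_\xi$ stationary whenever $\xi\notin A$. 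In $L[G]$ the predicate ``$S_\xi$ is non-stationary'' is then $\mathcal{L}(\aaq)$-definable from the $L$-parameter $\langle S_\xi\rangle$, so $A\in\CAA^{L[G]}$; but $A\notin L=\cto^{L[G]}$ by the absoluteness lemma, giving $\cto^{L[G]}\subsetneq\CAA^{L[G]}$.

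The main obstacle is the second step: verifying that the coding iteration is genuinely $\sigma$-closed while still preserving the stationarity of the un-coded $S_\xi$. This is standard coding-by-clubs technology but requires careful bookkeeping along the iteration; by contrast, the absoluteness lemma in the first step is routine.
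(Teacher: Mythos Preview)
Your overall architecture is the same as the paper's: code a generic object into a stationarity pattern so that $\CAA$ recovers it, while arguing that $\cto$ is unchanged because no new countable sets appear. Your absoluteness lemma in the first step is exactly what the paper uses as well.

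There is, however, a genuine gap in the second step. The forcing to shoot a club through $\omega_2\setminus S_\xi$, where $S_\xi$ is a stationary subset of $\{\alpha<\omega_2:\cof(\alpha)=\omega\}$, is \emph{not} $\sigma$-closed. Given a strictly descending $\omega$-sequence of conditions $\langle c_n:n<\omega\rangle$, the ordinal $\gamma=\sup_n\max(c_n)$ has cofinality $\omega$ and may lie in $S_\xi$; when it does, $\bigcup_n c_n\cup\{\gamma\}$ is not a condition and there is no lower bound. So the iterands are not $\sigma$-closed, hence neither is the iteration. What you actually need is that the whole iteration adds no new $\omega$-sequences; each step is indeed $\sigma$-distributive by a BHK-style argument, but $\sigma$-distributivity does not iterate for free, and establishing it for your countable-support iteration of length $\omega_2$ is substantially more than the ``careful bookkeeping'' you suggest. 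Whatever is done in \cite[Theorem~3.6]{KMV2} would have to be unpacked here, not merely cited.

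The paper avoids all of this with a much shorter construction. Over $L$, first add a Cohen real $a$: Cohen forcing is homogeneous and $\cto\subseteq\hod$, so $\cto^{L[a]}=L$, while the fixed disjoint stationary sets $S_n\subseteq\omega_1$ remain stationary by the c.c.c. Then perform a \emph{single} Baumgartner--Harrington--Kleinberg club-shooting on $\omega_1$ so that, in the extension, $n\in a$ is equivalent to ``$S_n$ is stationary''. This one forcing is $\sigma$-distributive, hence adds no countable sets and leaves $\cto=L$; but now $a\in\CAA\setminus L$. No iteration, no $\omega_2$, no preservation of stationarity along a long sequence---just homogeneity for the first step and distributivity of one club-shooting for the second.
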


\begin{proof}
Assume $V=L$. Let $S_n$, $n<\omega$, be disjoint stationary subsets of $\omega_1$. We force  a Cohen subset $a$ of $\omega$. The inner model $\cto$ is still after the forcing just $L$, by the homogeneity of the forcing. The sets $S_n$ remain stationary under this forcing as the forcing is CCC. Next we code the Cohen real by shooting a club into the stationary set $\bigcup_{n\notin a}(\omega_1\setminus S_n)$. This forcing does not change $\cto$, because the forcing does not add new countable sets. So in the final model we still have $\cto=L$ but now 
$\forall n(n\in a\iff \mbox{$S_n$ is stationary})$, so $a\in \CAA$, and therefore $\CAA\ne L$.  
\end{proof}

 Theorem~\ref{lmu} below gives a similar result in the presence of a measurable cardinal.
%Question: How far can this be extended?
%Depends on the SQuaRE paper.
We do not know at the moment whether large cardinals imply $\CAA\subseteq\cto$, but there are indications in this direction (see, e.g., Theorems \ref{manywoodins}, \ref{realsofcaaincto},  and \ref{subsetsofomega1}   below).

We can make the following further remark concerning Q1:

\begin{theorem}
    Con(ZF) implies Con($\cto\models\neg\CH$). 
\end{theorem}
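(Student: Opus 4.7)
The strategy is to start with $V=L$ and force to an extension $V[G]$ where $\cto^{V[G]}$ contains $\omega_2^L$ many distinct reals while $\aleph_1^{\cto^{V[G]}}=\omega_1^L$. The contrast with the preceding theorem is instructive: there a homogeneous forcing (Cohen) was used precisely to keep $\cto=L$, and the $\aaq$-quantifier, which is absent from $\mathcal{L}^2_\omega$, then lifted the Cohen real into $\CAA$; here we want the opposite, a non-homogeneous forcing whose new reals are $\mathcal{L}^2_\omega$-definable and hence get picked up by $\cto$.

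Concretely, I would iterate, with finite support and of length $\omega_2^L$, a Jensen-style $\Pi^1_2$-singleton forcing. At each stage $\alpha<\omega_2^L$ a single real $r_\alpha$ is added which is the unique real satisfying a prescribed $\Pi^1_2$-condition $\phi_\alpha(r)$ carrying the parameter $\alpha$. Since every $\Pi^1_2$-statement about reals is expressible in $\mathcal{L}^2_\omega$ (its bound second-order variables are allowed to range over countable subsets of $\omega$), each $r_\alpha$ can be recovered in $\cto^{V[G]}$: for any $\gamma>\alpha$ the definition
\[
r_\alpha=\{n\in\omega:(J'_\gamma,\in,\Tr)\models \exists X\,(\phi_\alpha(X)\wedge n\in X)\}
\]
is $\mathcal{L}^2_\omega$ with the ordinal parameter $\alpha\in J'_\gamma$, so $r_\alpha\in J'_{\gamma+\omega}\subseteq\cto^{V[G]}$.

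Since the iteration is CCC (each iterand is CCC and finite-support iterations of CCC forcings are CCC), $\omega_1^L=\omega_1^{V[G]}$ and no $\omega$-cofinal sequence through $\omega_1^L$ is added, a fortiori no $\mathcal{L}^2_\omega$-definable one; hence $\aleph_1^{\cto^{V[G]}}=\omega_1^L$. The $\omega_2^L$ pairwise distinct reals $\{r_\alpha:\alpha<\omega_2^L\}\subseteq\cto^{V[G]}$, together with $\aleph_1^{\cto^{V[G]}}=\omega_1^L<\omega_2^L$, yield $\cto^{V[G]}\models 2^{\aleph_0}\geq\aleph_2$, i.e.\ $\neg\CH$.

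The hard part is ensuring that the $\Pi^1_2$-singleton status of $r_\alpha$ is not destroyed by later stages of the iteration, so that $\phi_\alpha$ still uniquely picks out $r_\alpha$ in the final model $V[G]$ and not only in the intermediate $V_{\alpha+1}$. This is where Jensen-style coding does the work: $\phi_\alpha$ is designed so that it refers to a reshaping of the sequence of previously added generics that is stable under subsequent steps; alternatively, the whole sequence $\langle r_\alpha:\alpha<\omega_2^L\rangle$ is coded by almost disjoint coding into a single $\mathcal{L}^2_\omega$-recoverable parameter living in an $L$-definable almost disjoint family. Either route turns the heuristic above into a proof.
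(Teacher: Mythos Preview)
Your strategy is sound in outline, but the step you yourself flag as ``hard'' --- arranging a length-$\omega_2$ c.c.c.\ iteration over $L$ so that each generic real remains projectively isolated (with ordinal parameter) in the \emph{final} model --- is essentially the content of Harrington's theorem on long projective wellorderings \cite{MR0465866}. Neither of the two routes you sketch is carried out, and each hides real work (in particular, a naive finite-support iteration of Jensen's $\Pi^1_2$-singleton forcing does not obviously preserve the singleton property of earlier stages). As written this is a plan rather than a proof.

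The paper bypasses this by citing Harrington directly: he produces a model of $\neg\CH$ with a projective well-order $<^*$ of the reals. The absorption of all reals into $\cto$ is then argued differently from your per-real scheme. Call a well-ordered sequence of reals \emph{nice} if each entry is the $<^*$-least real not among its predecessors; niceness is $\mathcal{L}^2_\omega$-expressible (reals are countable sets, and $<^*$ is projective), and any two nice sequences are comparable under end-extension. The union $s$ of all nice sequences lying in $\cto$ is again a nice sequence in $\cto$, and if it omitted some real, the $<^*$-least omitted one would be $\mathcal{L}^2_\omega$-definable from $s$ and hence appendable inside $\cto$, contradicting maximality. So $\cto$ contains every real of $V$; since $\omega_1^{\cto}=\omega_1^V$ (simply from $L\subseteq\cto\subseteq V$ --- your detour through ``no new cofinal $\omega$-sequences in $\omega_1$'' is unnecessary), $\neg\CH$ passes down. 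One projective binary relation thus replaces your $\omega_2$ many singleton formulas, and all the iteration bookkeeping is outsourced to \cite{MR0465866}.
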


\begin{proof}
By    \cite{MR0465866} it is consistent, relative to the consistency of ZF that $\CH$ fails but there is a projective well-order $<^*$ of the reals. We show now that in this model the inner model $\cto$ contains all the reals.
%We can use transfinite induction on this well-order to prove that in the model of  \cite{MR0465866} the inner model $\cto$ contains all the reals. 
To this end, let us call a well-ordered sequence of reals \emph{nice}, if every element of the sequence is the first in the well-order $<^*$ after the preceding elements of the sequence. We can say in $\cto$ that a sequence in $\cto$ is nice in $V$. Of any two nice sequences one is always an initial segment of the other. Let $s$ be the  union of all nice sequences that are in $\cto$. Clearly, $s$ itself is nice and contains all the reals of $V$. 
\end{proof}

\begin{conjecture}
    For every $n$, $(\neg\CH)^{\cto}$ is consistent with $n$ Woodin cardinals, assuming the existence of $M_n$. 
    \end{conjecture}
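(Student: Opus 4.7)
The plan is to replicate the proof of the preceding theorem, with the Con$(\zf)$ hypothesis strengthened to the existence of $M_n$. Concretely, starting inside a ground model $V\supseteq M_n$, I would force to an extension $V[G]$ satisfying (a) $n$ Woodin cardinals still exist in $V[G]$, (b) $\neg\CH$ holds in $V[G]$, and (c) the reals of $V[G]$ carry a well-order $<^*$ whose graph is $\mathcal{L}^2_\omega$-definable. Once (a)--(c) are in hand, the ``nice sequence'' argument of the preceding theorem transfers verbatim: a sequence of reals is nice iff each entry is the $<^*$-least real not yet listed; any two nice sequences are compatible; the union $s$ of all nice sequences lying in $\cto^{V[G]}$ is itself nice and lies in $\cto^{V[G]}$; by maximality $s$ enumerates every real of $V[G]$. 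Since then $|\R|^{\cto}\ge \aleph_2^{V[G]}>\aleph_1^{V[G]}\ge \aleph_1^{\cto}$, we obtain $(\neg\CH)^{\cto^{V[G]}}$ together with $n$ Woodin cardinals in $V[G]$.

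For steps (a) and (b), arrange $\GCH$ in $V$ with the $n$ Woodin cardinals cofinal in the ordinals and above $\aleph_3$. Then first add $\aleph_2$ Cohen reals: this is ccc, has size $\aleph_2$, and preserves every sufficiently large cardinal (in particular the Woodin cardinals) by the usual small-forcing preservation lemma. Following this, perform a Beller--Jensen--Welch style almost-disjoint coding of size at most $2^{\aleph_0}$ producing a projective well-order of the reals of $V[G]$. Both forcings are small relative to every Woodin cardinal of $V$, so the Woodins persist into $V[G]$.

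The serious obstacle is step (c). Beller--Jensen--Welch deliver a $\Sigma^1_3$ well-order in $V[G]$, but $\mathcal{L}^2_\omega$ only quantifies over \emph{countable} second-order objects, so $\Sigma^1_3$-definability in $V[G]$ does not automatically translate into $\mathcal{L}^2_\omega$-definability inside $\cto^{V[G]}$. I would therefore try to refine the coding so that each real $r$ comes equipped with a canonical countable witness (e.g. its almost-disjoint coding predicate restricted to a chosen countable almost-disjoint family), making the graph of $<^*$ determined by countable data and hence $\mathcal{L}^2_\omega$-definable. An alternative is Friedman-style localised coding below the first Woodin cardinal, designed from the start to use only countable witnesses. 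A further fallback is the weaker form of (c) that inside the $J'$-hierarchy defining $\cto$ every real of $V[G]$ eventually appears together with its $<^*$-rank, which already suffices to run the nice-sequence argument level by level.

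The hard step is therefore the design of the coding forcing: it must be simultaneously mild enough (small relative to the first Woodin cardinal, so Woodinness is preserved) and uniform enough (the well-order it induces must be recoverable from countable data, so as to be visible to $\mathcal{L}^2_\omega$). Steps (a), (b) and the concluding nice-sequence argument are routine adaptations of standard preservation results and of the proof of the previous theorem; essentially everything in the conjecture rests on finding the right coding in (c).
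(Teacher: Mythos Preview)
This statement is labelled a \emph{Conjecture} in the paper and is repeated verbatim as open question~6 at the end; the paper offers no proof, so there is nothing to compare your attempt against. What you have written is a reasonable outline of how one might try to attack the problem, but it does not resolve it, and the difficulty you isolate is not quite the right one.

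Your worry in step~(c)---that a $\Sigma^1_3$ well-order might not be $\mathcal{L}^2_\omega$-definable because the logic only quantifies over countable objects---is misplaced. Reals \emph{are} countable subsets of $\omega$, so once $\omega\in J'_\alpha$ the second-order quantifiers of $\mathcal{L}^2_\omega$ range over all reals of $V$; hence every projective relation on reals is already $\mathcal{L}^2_\omega$-expressible at any level containing $\omega$. This is exactly how the preceding theorem works, and the nice-sequence argument would go through unchanged if a projective well-order were available.

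The genuine obstruction is the tension between projective well-orders and Woodin cardinals. By Martin--Steel, $n$ Woodin cardinals with a measurable above yield $\Pi^1_{n+1}$-determinacy, and hence every $\Sigma^1_{n+2}$ set of reals is Lebesgue measurable; in particular there is no $\Sigma^1_{n+2}$ well-order of the reals. The Harrington/Beller--Jensen--Welch machinery you invoke produces well-orders at level $\Sigma^1_3$ (or thereabouts), which is already ruled out in the presence of a single Woodin cardinal with anything above it. Inside $M_n$ itself there is a $\Delta^1_{n+2}$ well-order, but $M_n\models\CH$; once you add $\aleph_2$ Cohen reals this well-order is destroyed, and re-coding a well-order of length $\aleph_2$ at a projective level compatible with the surviving Woodin cardinals is precisely the step nobody currently knows how to carry out. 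So steps~(a) and~(b) are indeed routine, but the ``right coding'' you ask for in~(c) is not a matter of making witnesses countable---it is a matter of producing a long projective well-order above the determinacy threshold forced by the Woodins, and that is why the statement remains a conjecture.
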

    %Question: How much large cardinals can we have in this? As long as we have projective well-order of the reals and the inner model is included in $\cto$. For any $n$, can have $n$ Woodin cardinals.

 We proceed now to question Q3. We first note a simple fact:
\begin{lemma}\label{chang}
    $\cto\subseteq \hod^{C_{\omega_1\omega}}$.
\end{lemma}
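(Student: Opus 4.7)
The plan is to show that the hierarchy $(J'_\alpha)_{\alpha\in\Lim}$ and the truth predicate $\Tr$ defining $\cto$ can be constructed \emph{inside} $C_{\omega_1\omega}$ and yield exactly the same objects as in $V$. Once this is established, $\cto$ becomes a definable subclass of $C_{\omega_1\omega}$ with a canonical well-ordering (coming from the $J'$-hierarchy), so every element of $\cto$ is ordinal-definable in $C_{\omega_1\omega}$, yielding $\cto\subseteq\hod^{C_{\omega_1\omega}}$.

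The key absoluteness observation is the following. Since $C_{\omega_1\omega}$ is closed under countable sequences from $V$, whenever $M\in C_{\omega_1\omega}$ and $A\subseteq M^k$ is countable in $V$, any $V$-enumeration $e:\omega\to A$ is a countable sequence of elements of $C_{\omega_1\omega}$ and hence lies in $C_{\omega_1\omega}$; therefore $A\in C_{\omega_1\omega}$. Thus the countable subsets and countable $k$-ary relations on any element $M\in C_{\omega_1\omega}$ are the same whether computed in $V$ or in $C_{\omega_1\omega}$.

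The main step is an induction on $\alpha\in\Lim$ showing that $J'_\alpha$ and $\Tr\restriction\alpha$ as computed in $V$ agree with the corresponding objects computed in $C_{\omega_1\omega}$. Limit stages and the rudimentary closure operation $\rud_{\Tr}$ are clearly absolute, so the whole content of the induction lies in verifying, at the successor step, that for each $\mathcal{L}^2_\omega$-formula $\phi(\bar x)$ and each $\bar a\in J'_\alpha$ the satisfaction $(J'_\alpha,\in,\Tr\restriction\alpha)\models\phi(\bar a)$ has the same truth value in $V$ and in $C_{\omega_1\omega}$. This is exactly where the preceding paragraph is used: the first-order quantifiers range over $J'_\alpha$ in both models, while the second-order quantifiers of $\mathcal{L}^2_\omega$ range over countable subsets and relations on $J'_\alpha$, which by the absoluteness lemma are identical in $V$ and $C_{\omega_1\omega}$.

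Having concluded that $\cto=(\cto)^{C_{\omega_1\omega}}$, the containment $\cto\subseteq\hod^{C_{\omega_1\omega}}$ follows immediately: each element of $\cto$ is definable in $C_{\omega_1\omega}$ as the unique object occupying a given position in the definable well-ordering of the $J'$-hierarchy, using only ordinal parameters. The only delicate point is the absoluteness of $\mathcal{L}^2_\omega$-satisfaction, and this hinges entirely on the countable-sequence closure of $C_{\omega_1\omega}$; everything else is a routine inductive check that parallels the standard proof of $L=L^L$.
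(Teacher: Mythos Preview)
Your proof is correct and follows essentially the same approach as the paper: you show $\cto=(\cto)^{C_{\omega_1\omega}}$ by induction on the hierarchy, using that $C_{\omega_1\omega}$ is closed under countable sequences so that $\mathcal{L}^2_\omega$-satisfaction is absolute, and then conclude by ordinal-definability via the $J'$-hierarchy. The paper compresses all of this into two sentences, invoking the already-stated inclusion $\cto\subseteq\hod$ (relativized to $C_{\omega_1\omega}$) in place of your explicit well-ordering argument, but the content is identical.
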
 

\begin{proof}
    If we  repeat the construction of $\cto$ inside ${C_{\omega_1\omega}}$, we obtain the same $\cto$, as can be easily seen by induction on the levels of the construction of $\cto$. Thus, $\cto=\cto^{{C_{\omega_1\omega}}}$. On the other hand,  $\cto\subseteq\hod$.
\end{proof}

Assuming a proper class of Woodin limits of Woodin cardinals, the theory of  the Chang model cannot be changed by (set) forcing (\cite[Corollary 3.1.7]{MR2069032}). Hence we obtain the following answer to Q3:  
\begin{theorem}[Woodin]
    Assuming a proper class of Woodin limits of Woodin cardinals, the theory of $\cto$ cannot be changed by forcing. 
\end{theorem}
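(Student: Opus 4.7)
The plan is to reduce the forcing absoluteness of the theory of $\cto$ to Woodin's forcing absoluteness of the theory of the Chang model $C_{\omega_1\omega}$. The key bridge is the observation made in the proof of Lemma~\ref{chang}: the construction of $\cto$ is absolute between $V$ and $C_{\omega_1\omega}$, so $\cto = \cto^{C_{\omega_1\omega}}$. This means there is a ZF-formula $\Phi(x)$ (namely, membership in some $J'_\alpha$ from the hierarchy in the definition of $\cto$) which, when interpreted inside $C_{\omega_1\omega}$, defines exactly $\cto$. Crucially, this formula does not refer to any parameter from $V$.

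First I would verify that this uniform definability persists in any set-generic extension $V[G]$. Since the hierarchy $(J'_\alpha)$ is given by a straightforward recursion inside ZF that only makes reference to satisfaction in $\mathcal{L}^2_\omega$ over transitive levels $J'_\alpha$, the same recursion, performed inside $C_{\omega_1\omega}^{V[G]}$, produces precisely $(\cto)^{V[G]}$. In other words, applying Lemma~\ref{chang} in $V[G]$ yields $(\cto)^{V[G]} = (\cto)^{C_{\omega_1\omega}^{V[G]}}$, defined by the same formula $\Phi$.

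Next, for any sentence $\phi$ in the language of set theory, let $\phi^{\Phi}$ denote its relativization to the class defined by $\Phi$. Then by the previous paragraph,
\[
(\cto)^V \models \phi \iff C_{\omega_1\omega}^V \models \phi^{\Phi}, \qquad (\cto)^{V[G]} \models \phi \iff C_{\omega_1\omega}^{V[G]} \models \phi^{\Phi}.
\]
By Woodin's theorem (\cite[Corollary 3.1.7]{MR2069032}), under a proper class of Woodin limits of Woodin cardinals the right-hand sides agree, since $\phi^{\Phi}$ is a fixed sentence whose truth in $C_{\omega_1\omega}$ is forcing-absolute. Hence the left-hand sides agree, i.e.\ the theory of $\cto$ is preserved by $\mathbb{P}$.

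There is no serious obstacle here beyond bookkeeping, because Woodin's theorem is the heavy machinery. The only point that requires care is the uniformity of the definition of $\cto$ inside $C_{\omega_1\omega}$ across generic extensions, i.e.\ that the large cardinal hypothesis is not needed in $V[G]$ to carry out this reduction --- only for Woodin's theorem itself, whose hypothesis is preserved by set forcing since a proper class of Woodin limits of Woodin cardinals is preserved by set-sized forcing.
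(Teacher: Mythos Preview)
Your proposal is correct and follows exactly the paper's approach: the paper simply observes that by the proof of Lemma~\ref{chang} one has $\cto=(\cto)^{C_{\omega_1\omega}}$ uniformly, and then invokes Woodin's forcing absoluteness of the theory of the Chang model \cite[Corollary 3.1.7]{MR2069032}. Your write-up just spells out the relativization step $\phi\mapsto\phi^{\Phi}$ that the paper leaves implicit.
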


This emphasizes the relevance of trying to solve Q1. In this respect there is a similarity between $\cto$ and the inner models $C^*$ and $\CAA$.

Let us recall that assuming a proper class of Woodin cardinals, there is no inner model of $\CAA$ with a Woodin cardinal. In sharp contrast to this:

\begin{theorem}\label{manywoodins}If there is a proper class of Woodin cardinals, then $\cto\nsubseteq \CAA$. Moreover, then $\cto$ contains, for every $n$, an inner model with $n$ Woodin cardinals.
\end{theorem}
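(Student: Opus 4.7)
The plan is to reduce both halves of the theorem to the single statement that $M_n^\sharp \in \cto$ for each $n$, assuming a proper class of Woodin cardinals (which guarantees the existence and iterability of $M_n^\sharp$). Once this is established, the ``moreover'' clause is immediate: $L[M_n^\sharp] \subseteq \cto$ is an inner model with $n$ Woodin cardinals. For the non-inclusion $\cto \nsubseteq \CAA$, I would invoke the fact quoted just above the theorem that under a proper class of Woodin cardinals $\CAA$ contains no inner model with a Woodin cardinal. In particular $M_1^\sharp \notin \CAA$ (else $L[M_1^\sharp] \subseteq \CAA$ would be such an inner model), so $M_1^\sharp$ is a witness to $\cto \nsubseteq \CAA$.

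To place $M_n^\sharp$ inside $\cto$, I would use that $M_n^\sharp$ is a countable (in $V$) premouse, characterized uniquely (by comparison) as the minimal premouse satisfying a first-order sentence $\varphi_n$ encoding the standard defining properties of the $n$-th sharp, together with countable iterability. The key observation is that since $M_n^\sharp$ itself is countable in $V$, every putative iteration tree on it of countable length is a countable object in $V$; hence the iterability clause ``for every countable putative iteration tree $T$ on $x$ there exists a cofinal well-founded branch through $T$'' quantifies only over countable subsets and sequences of the domain, and is expressible by a formula $\psi_{\mathrm{it}}(x)$ of $\mathcal{L}^2_\omega$ over any transitive structure containing $x$ and the coding apparatus for premice and trees.

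Working in the hierarchy $(J'_\alpha)$ of Section~\ref{cto}, I would choose $\alpha \in \Lim$ large enough that $J'_\alpha$ contains $\omega$ and the rudimentary coding machinery for finite-support premice and iteration trees. Then the $\mathcal{L}^2_\omega$-formula $\theta(x) \equiv \varphi_n(x) \wedge \psi_{\mathrm{it}}(x)$ defines over $(J'_\alpha, \in, \Tr\restriction\alpha)$ exactly the singleton $\{M_n^\sharp\}$, so $M_n^\sharp \in \rud_{\Tr}(J'_\alpha \cup \{J'_\alpha\}) = J'_{\alpha+\omega} \subseteq \cto$.

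The main obstacle I expect is the careful verification that countable iterability is genuinely captured by an $\mathcal{L}^2_\omega$-formula in the sense of $V$. One must check that letting the second-order variables of $\mathcal{L}^2_\omega$ range over subsets countable \emph{in $V$} is enough to witness every relevant iteration tree on $M_n^\sharp$, and that well-foundedness of cofinal branches is absolute between $V$ and the level $J'_\alpha$ at which the definition is being evaluated. Absoluteness of well-foundedness and the fact that every iteration tree on a countable mouse is coded by a countable subset of $\omega$ should make this work, but the coding must be chosen so that both $\varphi_n$ and $\psi_{\mathrm{it}}$ require only second-order quantification over countable sets, not true (full) second-order quantification; one then appeals to the standard uniqueness theorem for minimal iterable mice to conclude that $\theta$ isolates $M_n^\sharp$ uniquely.
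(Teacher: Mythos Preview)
Your overall strategy---reduce everything to $M_n^\sharp\in\cto$, then use $M_1^\sharp\notin\CAA$ for the non-inclusion and iterate $M_n^\sharp$ for the inner models---is exactly the paper's plan. The paper, however, takes a much shorter route to $M_n^\sharp\in\cto$: it simply cites Steel's theorem that $M_n^\sharp$ is a $\Pi^1_{n+2}$-singleton. Since any projective formula over the reals is an $\mathcal{L}^2_\omega$-formula (quantification over reals is quantification over countable subsets of $\omega$), the real coding $M_n^\sharp$ is immediately $\mathcal{L}^2_\omega$-definable as a subset of $\omega$, and hence lies in $\cto$. Your attempt to express countable iterability directly in $\mathcal{L}^2_\omega$ is essentially a re-derivation of (a weak form of) this projective-singleton fact; invoking the known result avoids all the coding and absoluteness checks you flag as obstacles.

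There is one genuine gap in your write-up. You say that $\theta(x)\equiv\varphi_n(x)\wedge\psi_{\mathrm{it}}(x)$, with $x$ a \emph{first-order} variable, ``defines over $(J'_\alpha,\in,\Tr\rest\alpha)$ exactly the singleton $\{M_n^\sharp\}$'' and conclude $M_n^\sharp\in J'_{\alpha+\omega}$. But an $\mathcal{L}^2_\omega$-formula with a free first-order variable defines a subset of $J'_\alpha$; for that subset to be $\{M_n^\sharp\}$ you would need $M_n^\sharp\in J'_\alpha$ already, which is precisely what you are trying to prove. The fix is to treat $M_n^\sharp$ as (coded by) a real and define that real as a subset of $\omega$: let $x$ be a \emph{second-order} (countable) variable, and put $r_n=\{k\in\omega:\exists X(\theta(X)\wedge k\in\mathrm{code}(X))\}$, or equivalently use the $\Pi^1_{n+2}$-singleton formula $\psi$ and set $r_n=\{k\in\omega:\forall r(\psi(r)\to k\in r)\}$. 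Either way the definition lives over $\omega\in J'_\alpha$, not over a yet-to-be-constructed element.

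For the non-inclusion $\cto\nsubseteq\CAA$, the paper argues slightly differently from you: rather than invoking ``$\CAA$ has no inner model with a Woodin cardinal'', it uses the result from \cite{KMV2} that (under a proper class of Woodin cardinals) the reals of $\CAA$ form a countable $\Sigma^1_3$ set, which excludes the $\Pi^1_3$-singleton $M_1^\sharp$. Your argument via the absence of inner models with a Woodin cardinal in $\CAA$ is equally valid and arguably more direct given what is stated just before the theorem.
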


\begin{proof}For every $n<\omega$, the real
  $M_n^\sharp$ is a $\Pi^1_{n+2}$-singleton (\cite{MR1336414}) and can therefore be defined in, and is an element of, $\cto$. On the other hand, by \cite[Theorem 5.32]{KMV2} and our assumption, the reals of $\CAA$ form a countable $\Sigma^1_3$-set. Hence $M_1^\sharp\notin\CAA$.  If $M_n^\sharp$ is stretched by iterating the top measure inside $\cto$ over the ordinals, an inner model with $n$ Woodin cardinals obtains inside $\cto$.
\end{proof}

\begin{corollary}
    Suppose there are infinitely many Woodin cardinals. Then $$\hc^{\cto}\prec\hc^V.$$
\end{corollary}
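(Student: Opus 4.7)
My plan is to verify the Tarski--Vaught criterion for $\hc^{\cto}\subseteq\hc^V$. The inclusion itself is immediate, since any $\cto$-bijection between $\omega$ and a set remains a bijection in $V$. Coding each hereditarily countable set by a real (the Mostowski code of its transitive closure), the elementarity $\hc^{\cto}\prec\hc^V$ reduces to the following two assertions, for every $n<\omega$ and every $\vec r\in\mathbb{R}^{\cto}$: (i) the $\Sigma^1_n$-theory of $V$ at $\vec r$ agrees with that of $\cto$ at $\vec r$; and (ii) every nonempty $\Sigma^1_n$ set with parameters from $\mathbb{R}^{\cto}$ contains a real of $\cto$.

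The core step is a relativized version of Theorem~\ref{manywoodins}: for every $x\in\mathbb{R}^{\cto}$ and every $n$, $M_n^\sharp(x)\in\cto$. The argument is identical to that of Theorem~\ref{manywoodins} with $x$ carried along as a parameter: $M_n^\sharp(x)$ is a $\Pi^1_{n+2}(x)$-singleton, and the corresponding $\mathcal{L}^2_\omega$-formula with parameter $x$ picks out the correct $V$-object inside $\cto$, because the second-order quantifiers in the definition of $\cto$ range over $V$-countable sets rather than $\cto$-countable ones. In particular, $\cto$ is closed under the operation $x\mapsto M_n^\sharp(x)$ for every $n$.

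From this closure, I would obtain (i) and (ii) via the Martin--Steel tree representations of projective pointclasses available under PD: each $\Pi^1_{n+1}$ set admits a canonical tree built uniformly from the appropriate sharp of its real parameters, so these trees themselves lie in $\cto$. Projective absoluteness between $\cto$ and $V$ then gives (i), while the leftmost-branch of such a tree produces a witness in $\cto$, giving (ii). Translating back through the real coding of $\hc$ yields the Tarski--Vaught condition and hence $\hc^{\cto}\prec\hc^V$.

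The main obstacle I anticipate is making precise the passage from pointwise closure under sharps to full projective absoluteness between $\cto$ and $V$: one must verify that the inductive construction of the Martin--Steel trees (equivalently, the Moschovakis scales on the relevant pointclasses) can be carried out inside $\cto$ using only the sharps already present there, and that the leftmost-branch selection produces a real that $\cto$ recognises. This is exactly where the hypothesis of infinitely many Woodin cardinals is essential, guaranteeing that $M_n^\sharp(x)$ exists for every $n$ and every real $x$ and is correctly computed inside $\cto$.
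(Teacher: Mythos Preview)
Your proposal is correct and rests on the same central observation as the paper---closure of $\cto$ under $r\mapsto M_n^\sharp(r)$ for every $n$---but you take a longer route to exploit it. Where you pass through Martin--Steel tree representations and leftmost-branch selection, the paper appeals directly to Steel's theorem that $\hc^{M_n(r)}\prec_{\Sigma_{n+1}}\hc^V$ for every real $r$. Given $\vec a\in\mathbb{R}^{\cto}$ and a $\Sigma_{n+1}$-formula $\exists y\,\psi(y,\vec a)$ true in $\hc^V$, a witness already lies in $\hc^{M_n(\vec a)}\subseteq\hc^{\cto}$; the downward direction of Tarski--Vaught then follows by the inductive hypothesis on $\psi$. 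This is precisely your step (ii), but obtained without building any trees inside $\cto$, so the obstacle you flag in your final paragraph simply does not arise. Your approach has the advantage of being closer to the general descriptive-set-theoretic picture (scales, uniformization), while the paper's is shorter and sidesteps the question of whether the relevant trees live in $\cto$.
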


\begin{proof} Let us fix $n<\omega$. We already know that  $M_n^\sharp$ is in $\cto$. In fact, $M_n^\sharp(r)$ is in $\cto$ for all reals $r\in\cto$ (\cite{MR4155448}). Moreover, $\hc^{M_n(r)}$ is a $\Sigma_{n+1}$-elementary submodel of $\hc$ for every real $r$ (\cite{MR1336414}). An easy induction now shows that if $a_1,\ldots,a_n$ are reals in $\cto$, then for all $\Sigma_{n+1}$-formulas $\phi(x_1,\ldots,x_m)$ we have 
    $$\cto\models``\hc\models\phi(a_1,\ldots,a_m)"\iff V\models``\hc\models\phi(a_1,\ldots,a_m)".$$
\end{proof}

Note that if $V=\cto$, there cannot be any measurable cardinals. This can be seen with the same proof which shows that  there are no measurable cardinals in $L$ \cite{MR143710}.

Concerning question Q4 we have the following result:

\begin{theorem}\label{mahlo}Assuming a Woodin limit of Woodin cardinals, the cardinal $\omega_1^V$ is strongly Mahlo
in $C^2(\omega)$. 
\end{theorem}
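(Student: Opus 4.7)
The plan is to apply the generic ultrapower machinery from stationary tower forcing, enabled by the Woodin hypothesis. Let $\delta$ be the Woodin limit of Woodin cardinals and fix a Woodin cardinal $\kappa$ with $\omega_1^V<\kappa<\delta$. Forcing over $V$ with the full stationary tower $\Q_{<\kappa}$ produces, in a generic extension $V[G]$, an elementary embedding $j\colon V\to N$ with $\mathrm{crit}(j)=\omega_1^V$, $j(\omega_1^V)=\kappa$, and $N^{<\kappa}\cap V[G]\subseteq N$; as this forcing is semi-proper, $\omega_1^V$ is preserved as $\omega_1$ in $V[G]$. Since $\cto$ is uniformly definable, $j$ restricts to an elementary embedding $\hat j\colon\cto^V\to\cto^N$ with critical point $\omega_1^V$.

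The Mahlo-reflection step is then standard. Suppose, toward a contradiction, that $\omega_1^V$ is not strongly Mahlo in $\cto^V$. Then $\cto^V$ contains a club $C\subseteq\omega_1^V$ consisting entirely of ordinals that are not strongly inaccessible in $\cto^V$. Apply $\hat j$: the set $\hat j(C)$ is a club in $j(\omega_1^V)=\kappa$ (computed in $\cto^N$) consisting of ordinals that are not strongly inaccessible in $\cto^N$. Because every $\xi<\omega_1^V$ is fixed by $\hat j$, we have $\hat j(C)\cap\omega_1^V=C$; closure of $\hat j(C)$ and unboundedness of $C$ in $\omega_1^V$ then force $\omega_1^V\in\hat j(C)$. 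So $\omega_1^V$ fails to be strongly inaccessible in $\cto^N$.

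To get the contradiction one must show, conversely, that $\omega_1^V$ \emph{is} strongly inaccessible in $\cto^N$. Regularity and cardinality of $\omega_1^V$ in $\cto^N$ are inherited from $V[G]$, since $\cto^N\subseteq N\subseteq V[G]$ and $\omega_1^V=\omega_1^{V[G]}$. The strong-limit clause requires that, for each $\alpha<\omega_1^V$, the $\cto^N$-cardinality of $\mathcal{P}(\alpha)\cap\cto^N$ lie strictly below $\omega_1^V$. Here the Woodin limit of Woodin hypothesis is used in an essential way beyond merely producing $j$: combined with $N^{<\kappa}\cap V[G]\subseteq N$, Woodin's theorem on generic absoluteness of the theory of the Chang model (applicable to $\cto$ via Lemma~\ref{chang}, which places $\cto\subseteq\mathrm{HOD}^{C_{\omega_1\omega}}$) lets one pull back any presumptive witness that $\mathcal{P}(\alpha)\cap\cto^N$ has $\cto^N$-cardinality $\ge\omega_1^V$ through $\hat j$, producing a corresponding bijection in $\cto^V$ contradicting that $\omega_1^V$ is a cardinal of $\cto^V$.

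This verification of strong inaccessibility in $\cto^N$ is the main obstacle. Unlike the classical case of $L$ under a measurable cardinal, where covering handles the bound on $|\mathcal{P}(\alpha)\cap L|$ outright, here one must exploit the fine structure of the $J'$-hierarchy and the $\mathcal{L}^2_\omega$-definability underlying $\cto$ to ensure that no new subsets of a countable ordinal appear unboundedly below $\omega_1^V$ in $\cto^N$. Once that bound is established, the reflection argument above closes the proof.
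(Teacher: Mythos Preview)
Your proposal sets up the stationary-tower embedding correctly and the reflection template is fine, but the argument has a genuine gap at exactly the point you flag as ``the main obstacle'': you never prove that $\omega_1^V$ is a strong limit in $\cto^N$. The sketch you offer does not work. A putative injection $f\colon\omega_1^V\hookrightarrow\mathcal{P}(\alpha)\cap\cto^N$ in $\cto^N$ cannot be ``pulled back through $\hat j$'' to $\cto^V$: the critical point of $\hat j$ is $\omega_1^V$, so $\omega_1^V$ is not in the range of $\hat j$, and there is no preimage of $f$ to speak of. Invoking generic absoluteness of the \emph{theory} of the Chang model (via Lemma~\ref{chang}) does not help either, since what you need is a statement with the parameter $\omega_1^V$, and absoluteness of the parameter-free theory says nothing about that. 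The closing appeal to ``fine structure of the $J'$-hierarchy'' is not an argument.

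The paper avoids this obstacle entirely by a different route. It uses the \emph{countable} tower $Q_{<\delta}$ at the Woodin limit $\delta$ of Woodin cardinals itself (so $j(\omega_1^V)=\delta$ and $M^\omega\subseteq M$ in $V[G]$), and then proves that $\cto^M$ is a definable subclass of $V$. The key steps are: (i) since $M^\omega\subseteq M$, one has $\cto^M=\cto^{V[G]}$; (ii) using that $\delta$ is a limit of Woodin cardinals, every countable sequence of ordinals in $V[G]$ is added by a small forcing over $V$, so a club through the $V$-regulars below $\delta$ (forced over $V[G]$ without adding reals) lets one interleave a $\Col(\omega,<\delta)$-generic $H$ over $V$ with $\oR^{V[G]}=\oR^{V[H]}$, hence $\cto^{V[G]}=\cto^{V[H]}$; (iii) by homogeneity of the Levy collapse, $\cto^{V[H]}\subseteq\hod^{V[H]}\subseteq V$. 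Now $\delta$ is inaccessible in $V$, hence in the subclass $\cto^M$, and elementarity of $j$ gives that $\omega_1^V$ is inaccessible in $\cto^V$; strong Mahloness then follows by the standard iteration. The point is that inaccessibility is verified for $j(\omega_1^V)$ in $\cto^M$ \emph{externally}, from $\cto^M\subseteq V$, rather than internally as you attempt.
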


%Open: weakly compact?

%Can do the same for $\omega_2^V$?

\begin{proof}
 Let $j:V\to M$ as in the countable Stationary Tower Forcing $Q_{<\delta}$ built from the Woodin limit $\delta$ of Woodin cardinals. We will show that $(\cto)^M$ is a definable subclass of $V$.
%Recall that when we proved in \cite{MR4290501} that $\omega_1^V$ is strongly Mahlo in $C^*$, we were able to identify in $V$ what the $C^*$ of $M$ is. It is the ``cof $<\delta$" model $C^*_{<\delta}$. Then we used elementarity: $\delta$ is inaccessible in $V$, hence inaccessible in  $C^*_{<\delta}$, whence $\omega_1^V$ is inaccessible in $C^*$.
%Fortunately,  we can likewise identify $\cto$ of $M$ in $V$:
Let us first note that $(C_{\omega_1\omega})^M=(C_{\omega_1\omega})^{V[G]}$ as $M^\omega\subseteq M$ in $V[G]$.
\medskip

Claim 1: $(C_{\omega_1\omega})^{V[G]}\subseteq V[\oR^{V[G]}]$.
\medskip

To prove the claim it suffices to show that every countable sequence of ordinals in $V[G]$ is in $V[\oR^{V[G]}]$.
 By  \cite[Chapter 3]{MR2069032},
 every $\omega$-sequence in $V[G]$ (i.e. in $M$) is generated (over $V$) by forcing $\oP$ that has size $<\delta$.  
This is where we use the assumption that $\delta$ is a limit of Woodin cardinals. As $\omega_1^{V[G]}=\delta$, a generic for $\oP$ can be coded by a real (in $V[G]$). Claim 1 follows.

The set $S$ of regular cardinals below $\delta$ is stationary in $V$, as $\delta$ is a Woodin cardinal. By \cite[Proposition 4.5]{KMV2} the set $S$ is a stationary subset of $\omega_1$ in $V[G]$. Let $\oF$ be the $\aleph_0$-distributive forcing of \cite{MR434818} that adds a generic over $V[G]$ club  $D\subseteq S$.

\medskip

Claim 2: $\oR^{V[G]}=\oR^{V[H]}$, for some $\Col(\omega,<\!\!\delta)$-generic over $V$ filter $H\in V[G][D]$.
\medbreak

The idea of the proof of this claim is essentially included in the proof of $\Sigma^2_1$-absoluteness under CH (see \cite{MR2069032}). We use $D$ to define in $V[G][D]$ a set  $H\subseteq \Col(\omega,<\!\!\delta)$ such that $H$  codes the reals of $V[G]$, i.e.
$\oR^{V[G]}(=\oR^{V[G][D]})=\oR^{V[H]}$. Let $(r_i)_{i<\delta}$, list the reals of $V[G]$.
We define an increasing club sequence  $(\alpha_i)_{i<\delta}$ of elements of $D$ as follows. 
%Since each $\alpha_i$ is countable in $V[G]$, we have $\Col(\omega,<\!\!\alpha_i)$-generics over $V$. 
Suppose we have, for some $i<\delta$, already constructed, for all $j\le i$, $\alpha_j$ and $G_j$ such that $G_j$ is  $\Col(\omega,<\!\!\alpha_j)$-generic over $V$,  and moreover, $r_j\in V[G_{j+1}]$ for $j< i$.
%We define $G_{i+1}$ in such a way that it is $\Col(\omega,<\!\!\alpha_i)$-generic over $V$. 
The real $r_{i}$ is constructed by some forcing $\oU$ over $V$ such that $|\oU|<\delta$. Let $\alpha_{i+1}\in D$ such that $|\oU|<\alpha_{i+1}$ and $\alpha_{i+1}>\alpha_i$. By the universality property of the Levy collapse  $\Col(\omega,<\!\!\alpha_{i+1})$, also known as the Kripke Embedding Theorem,  (see e.g. \cite[Corollary 26.8]{MR1940513}), the poset $\oU$ can be completely embedded into $\Col(\omega,<\!\!\alpha_{i+1})$. Since $\alpha_{i+1}$ is countable in $V[G]$, there is 
in $V[G]$ a $\Col(\omega,<\!\!\alpha_{i+1})$-generic $G_{i+1}$ over $V$, which extends $G_i$ and constructs $r_{i}$. We shall next define $G_i$ and $\alpha_i$ for limit $i$. Let $\alpha_i=\sup\{\alpha_j:j<i\}$. We still have $\alpha_i\in D$, whence $\alpha_i$ is regular in $V$.
% Let $G_i$ be $\Col(\omega,<\!\!\alpha_i)$-generic over $V$ for $i<\xi$, $\xi$ limit. 
Now $G_i=\bigcup_{j<i}G_j$ is $\Col(\omega,<\!\!\alpha_i)$-generic over $V$, because $\Col(\omega,<\!\!\alpha_i)$ has $\alpha_i$-c.c. Finally, let $H=\bigcup_{i<\delta}G_i$.
%So any maximal antichain is actually included in one of the forcings, and then $G_i$ meets it.
Claim 2 is proved.

Now all $\omega$-sequences from $V[G]$ are in $V[H]$. Thus $\cto^{V[H]}=\cto^{V[G]}$. By the homogeneity of Levy 
collapse $\Col(\omega,<\!\!\delta)$, $\hod^{V[H]}=\hod^V$. 
$\cto$ is always a definable subclass of $\hod$. Hence $\cto^{V[G]}=\cto^M$ is definable in $V$. The Woodin cardinal $\delta$ is inaccessible in $V$, hence in the definable subclass $\cto^M$ of $V$, so $\omega_1^V$ is inaccessible in $\cto$ by elementarity. The strong Mahloness of $\omega_1^V$ in $\cto$ follows now as in the proof of \cite[Theorem 5.13]{MR4290501}.   
\end{proof}

We leave it as an open question, whether the above proof can be extended to weak compactness or even measurability of $\omega_1^V$ in $\cto$ or to strong Mahloness in $\cto$ of every regular cardinal of $V$.
  
We use $\ctop$ to denote the modification of $\cto$ in which truth is checked in the next admissible set, that is, $(J'_\alpha,\in,{\tr}\rest\alpha)\models\exists X\phi$ is interpreted as ``there is a countable subset $A$ of $J'_\alpha$ such that the next admissible set of 
$(J'_\alpha,\in,{\tr}\rest\alpha,A)$
satisfies $\phi$.''

\begin{theorem}[\cite{mice}]\label{mice}
   If $V=L^\mu$, then $V=C(aa)$.
\end{theorem}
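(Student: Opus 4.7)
The plan is to show $V \subseteq \CAA$, which together with the trivial $\CAA \subseteq V$ yields equality. Since $V = L[\mu]$ with $\mu$ the unique normal measure on the unique measurable cardinal $\kappa$, it suffices to locate $\mu$ inside $\CAA$: once $\mu \in \CAA$, the $L[\mu]$-hierarchy can be reconstructed stage by stage inside $\CAA$, and then $V = L[\mu] \subseteq \CAA$ follows.

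The heart of the argument is therefore a characterization of $\mu$ in stationary logic. Under $V = L^\mu$ the measure is extremely rigid (by Kunen's uniqueness theorem and the covering lemma for $L^\mu$), and the measure-one sets are exactly those containing a tail of Silver indiscernibles below $\kappa$. Silver indiscernibles, in turn, are pinned down by reflection properties with respect to the $L^\mu$-hierarchy, and such reflection is naturally detected by the $\aaq$-quantifier applied to countable elementary submodels: an ordinal $\beta < \kappa$ is an indiscernible of the relevant kind iff for $\aaq$-many countable $s \prec (L^\mu_\gamma, \in, \beta)$ (with $\gamma$ sufficiently large) a fixed reflection scheme holds. Thus one aims for a uniform $\mathcal{L}(\aaq)$-formula $\Phi(A)$, evaluated over a fixed coding structure, satisfying $\Phi(A) \iff A \in \mu$.

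With such a $\Phi$ in hand, I would argue by induction on $\alpha$ that each $L^\mu_\alpha$ enters the $J'_\beta$-hierarchy defining $\CAA$ at a bounded stage. The truth predicate $\Tr$ built into the definition of $\CAA$ internalizes the $\aaq$-quantifier, so $\Phi$ is available at each level to recover the initial segment of $\mu$ needed there, after which the rudimentary closure operation reconstructs the next level of $L^\mu$. Closure under the inductive step and under unions at limits is then routine, and $V = L^\mu = \bigcup_\alpha L^\mu_\alpha \subseteq \CAA$.

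The main obstacle is the alignment between $\mu$-largeness on $\kappa$ and $\aaq$-largeness on countable subsets: these are a priori unrelated notions of ``almost everywhere'', and the bridge between them relies crucially on the rigidity afforded by $V = L^\mu$ — specifically, that the club of Silver indiscernibles below $\kappa$ is both $\mu$-measure-one and definable by stationary-logic reflection. Carrying out this matching with a single uniform $\mathcal{L}(\aaq)$-formula, rather than a piecemeal description varying with $A$, is the technical core of the theorem; everything else reduces to bookkeeping inside the $J'_\alpha$-hierarchy.
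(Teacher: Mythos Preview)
The paper does not actually prove this theorem: it is stated with a citation to \cite{mice} (Goldberg--Magidor--Schindler--Steel, \emph{under preparation}) and no argument is given here. So there is nothing in the present paper to compare your proposal against; the result is imported as a black box and then used in the proof of Theorem~\ref{lmu}.

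As for your outline on its own merits: the overall architecture is right --- to get $L^\mu\subseteq\CAA$ it is enough to define $\mu$ uniformly by an $\mathcal{L}(\aaq)$-formula over the levels of the $J'$-hierarchy, and once that is done the reconstruction of $L^\mu$ inside $\CAA$ is indeed bookkeeping. But your proposed bridge is not yet an argument. The phrase ``Silver indiscernibles below $\kappa$'' is at best a placeholder: Silver indiscernibles are the $0^\sharp$-indiscernibles for $L$, and the natural indiscernibles attached to $L^\mu$ (the critical points of the iterated ultrapowers) lie \emph{above} $\kappa$, not below it. You have not said what class of ordinals below $\kappa$ you mean, why that class has $\mu$-measure one, or why membership in it is expressible with the $\aaq$-quantifier over countable sets. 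Your own last paragraph concedes that this alignment is ``the technical core'' and leaves it unsupplied. So what you have is a correct reduction of the problem to the right subproblem, together with an unverified (and terminologically shaky) guess at how that subproblem is solved; the substantive content of the theorem --- the actual $\mathcal{L}(\aaq)$-definition of $\mu$ --- is missing.
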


\begin{theorem}\label{lmu}
    If $V=L^\mu$, then $\ctop=M_\omega$, whence $\ctop$ (and a fortiori also $\cto$) is a proper submodel of $C(aa)$.
\end{theorem}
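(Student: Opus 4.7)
The plan is to prove the equality $\ctop = M_\omega$ under $V = L^\mu$ by two inclusions, working with the iteration $\langle M_n, i_{m,n} : m \le n \le \omega\rangle$ of $L^\mu$ by its normal measure $\mu$, where $M_0 = L^\mu$, $M_\omega$ is the direct limit, and $\kappa_n = i_{0,n}(\kappa)$ are the critical points. Once $\ctop = M_\omega$ is established, the second assertion is immediate: by Theorem~\ref{mice} we have $L^\mu = \CAA$, while $M_\omega \subsetneq L^\mu$ since $\langle \kappa_n\rangle_{n<\omega}$ is a countable cofinal sequence in the measurable $\kappa_\omega$ of $M_\omega$, so cannot lie in $M_\omega$; hence both $\ctop$ and the sub-model $\cto \subseteq \ctop$ are proper submodels of $\CAA$.

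For the inclusion $M_\omega \subseteq \ctop$, I would exploit that the iteration is a canonical $\omega$-length construction. Each finite iterate $M_n$ and transition map $i_{n,n+1}$ is definable in $L^\mu$, and on any bounded rank the full sequence $\langle M_n, i_{n,n+1}\rangle_{n<\omega}$ is coded by a countable subset of $V$. The admissibility upgrade in $\ctop$---truth checked in the next admissible set over $(J'_\alpha, \in, \Tr\rest\alpha, A)$ for a countable witness $A$---is exactly strong enough to take the direct limit of such $\omega$-sequences. An induction on the $J'$-hierarchy should then yield $M_\omega \cap J'_\alpha \in \ctop$ for each $\alpha$, and hence $M_\omega \subseteq \ctop$.

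The main obstacle is the reverse inclusion $\ctop \subseteq M_\omega$. Here the goal is to show that no set outside $M_\omega$ is captured by countable second-order definability augmented with admissibility. The guiding idea is that $L^\mu$ sits over $M_\omega$ in a Prikry-like fashion, with the critical sequence $\langle \kappa_n\rangle$ playing the role of the generic; although this sequence lies in $V$, I would argue it is not definable in $\ctop$. Concretely, for any $\mathcal{L}^2_\omega$-formula $\phi$ and any countable parameter $A$, the truth value of $\phi(A)$ in the next admissible set should be invariant under the automorphisms of $L^\mu$ that permute the tail of $\langle \kappa_n\rangle$ while fixing $M_\omega$ pointwise. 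Consequently each $\ctop$-definable set is definable with ordinal parameters in $M_\omega$, placing it in $\hod^{M_\omega} = M_\omega$. The hard part will be making this Mathias-style invariance precise: one must verify that admissible sets generated from a countable parameter cannot detect the fine structure of the Prikry-style sequence below $\kappa_\omega$, presumably by combining a characterization of such parameters as $M_\omega$-definable from a finite initial segment of $\langle \kappa_n\rangle$ with the absoluteness properties of admissibility.
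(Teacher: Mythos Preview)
Your overall two-inclusion strategy matches the paper's, but both halves have real gaps compared to the argument actually given.

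For $\ctop \subseteq M_\omega$, the paper does not try to manufacture symmetries of $V$. Since $L^\mu$ is a canonical inner model it is rigid: there are no nontrivial automorphisms of $L^\mu$ permuting the tail of $\langle\kappa_n\rangle$, so your proposed invariance argument cannot get off the ground as stated. The paper instead routes the inclusion through the Chang model. One first observes (as in Lemma~\ref{chang}, which adapts verbatim to $\ctop$) that $\ctop \subseteq \hod^{C_{\omega_1\omega}}$, since the $\ctop$ construction relativizes unchanged to $C_{\omega_1\omega}$ and always sits inside $\hod$. Then one invokes Dehornoy's theorem that in $L^\mu$ the Chang model is exactly $M_\omega[\langle\kappa_n : n<\omega\rangle]$, a Prikry extension of $M_\omega$. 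Homogeneity of Prikry forcing gives $\hod^{M_\omega[\langle\kappa_n\rangle]} = M_\omega$, and the inclusion follows in one line. Your Mathias intuition is morally the right one, but it must be packaged as homogeneity of a forcing \emph{over $M_\omega$}, not as automorphisms of $V$; and identifying the Chang model with the Prikry extension is precisely the step you are missing.

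For $M_\omega \subseteq \ctop$, the paper does not attempt to reconstruct the iteration map-by-map. Instead it recovers the \emph{measure} of $M_\omega$ directly inside $\ctop$: once the $J'$-hierarchy is past, say, $\kappa^{+(17)}$, one quantifies over countable sequences of ordinals and asks whether some such sequence generates an iterable measure, with iterability verified in the next admissible set (this is where the ``$+$'' is used). The key point---which your direct-limit approach does not supply---is \emph{uniqueness}: the measure so produced is independent of which countable witness is chosen, because $M_\omega$ carries a unique normal measure and any cofinite tail of $\langle\kappa_n\rangle$ generates it. Your scheme of coding $\langle M_n, i_{n,n+1}\rangle$ on bounded ranks by a countable parameter does not explain how $\ctop$ singles out the correct code among all countable subsets of $J'_\alpha$; the paper sidesteps this by defining the measure intrinsically and then taking $M_\omega$ to be the least inner model containing it.
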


\begin{proof}
Let us assume $V=L^\mu$.     If we take the ultrapower $M$, the inner model $\ctop$ does not change, as $M^\omega\subseteq M$. 
In $L^\mu$ the Chang model is $M_\omega[\langle \kappa_n:n<\omega\rangle]$, i.e. $\omega$ iterations of $L^\mu$ plus the Prikry-sequence (\cite{MR0409185}). The $\hod$ of $M_\omega[\langle \kappa_n:n<\omega\rangle]$ is simply $M_\omega$, because Prikry-forcing is homogeneous. So $\ctop\subseteq M_\omega$. 

We next prove $M_\omega \subseteq \ctop$. Let us look at the sequence $\langle \kappa_n:n<\omega\rangle$.
The unique measure on $M_\omega$ is generated by the co-finite parts of this sequence. 
 Let us go in the hierarchy generating $\ctop$ past, say,  $\kappa^{+(17)}$. Then we can ask, is there a countable sequence of ordinals so that  we can construct an iterable measure from that. We can do it in the next admissible set i.e. the smallest admissible set containing the sequence and the level of $\ctop$.  We can define the measure of $M_\omega$ by quantifying over countable sets. Inside $\ctop$ we can check whether 
 %from the sequence we get a measure that is iterable.
 the measure is iterable. This defines the measure uniquely, although the countable sequence used is not unique. In this way we have defined the measure of $M_\omega$ in $\ctop$. Then we can define $M_\omega$ as the minimal model which contains that measure.

 Hence, if $V=L^\mu$, then by Theorem~\ref{mice}, $\cto$ is a proper submodel of $\CAA$.
\end{proof}

%OPEN: How far can we push the above?

\begin{theorem}\label{realsofcaaincto}
Assuming a proper class of Woodin cardinals, all reals of $\CAA$ are in $\cto$.
\end{theorem}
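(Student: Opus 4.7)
The plan is to combine the descriptive-set-theoretic description of $\oR\cap\CAA$ with the inner-model-theoretic fact, already exploited in Theorem~\ref{manywoodins}, that each $M_n^\sharp$ is an element of $\cto$.

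First, I would recall from \cite[Theorem 5.32]{KMV2} that under a proper class of Woodin cardinals the set $A=\oR\cap\CAA$ is a countable lightface $\Sigma^1_3$ set, hence in particular a countable lightface $\Sigma^1_4$ set. A proper class of Woodin cardinals gives full Projective Determinacy, so by Moschovakis's Second Periodicity Theorem the pointclass $\boldsymbol{\Sigma}^1_4$ has the scale property, with lightface scales on lightface sets. The standard descriptive-set-theoretic consequence is that a countable set in a pointclass with the scale property is contained in the corresponding class of lightface $\Delta$-singletons; applied to $A$, this shows that every $r\in A$ is a lightface $\Sigma^1_4$-singleton (enumerate $A$ along the scale norm and observe that the $n$-th element is uniquely pinned down by a $\Sigma^1_4$ formula whose only parameter is the integer $n$).

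Second, I would invoke $\Sigma^1_4$-correctness of $M_2$, a standard consequence of the existence and full iterability of $M_2^\sharp$ (guaranteed by a proper class of Woodin cardinals). This tells us that every lightface $\Sigma^1_4$-singleton is already a real of $M_2$, so $A\subseteq \oR\cap M_2$. By Theorem~\ref{manywoodins}, $M_2^\sharp\in\cto$, and iterating the top extender of $M_2^\sharp$ through the ordinals inside $\cto$ produces $M_2$ as an inner submodel of $\cto$. Combining these steps gives $A\subseteq \oR\cap M_2\subseteq \cto$, as required.

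The main delicate point is the descriptive-to-inner-model interface: one needs the scale on $A$ to be lightface (so that elements of $A$ become genuine lightface $\Sigma^1_4$-singletons with only integer parameters) and one needs $M_2$ to be correct at exactly the level $\Sigma^1_4$. Both facts rest on the full strength of Projective Determinacy together with the iterability of the $M_n^\sharp$, and are by now standard in the modern literature. Everything else is bookkeeping using material already assembled in the paper.
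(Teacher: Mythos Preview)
Your argument is correct but follows a different route from the paper's. The paper does not pass through $M_2$ at all: it uses the cited $\Sigma^1_3$ \emph{well-order} $<^*$ of $\oR\cap\CAA$ (not merely that the set is $\Sigma^1_3$) together with the defining feature of $\cto$, namely that at each level one may quantify over arbitrary countable sequences of reals from $V$. A short transfinite induction along $<^*$ then shows that the $\alpha$-th real in $<^*$ is definable in $\cto$, since its countable set of $<^*$-predecessors can be captured by a single second-order variable and the relation $<^*$ itself is expressible by real quantification. Your approach instead factors through inner model theory: $A$ countable $\Sigma^1_4$ $\Rightarrow$ every element is a lightface $\Delta^1_4$-singleton $\Rightarrow$ $A\subseteq\oR\cap M_2$ by $\Sigma^1_4$-correctness $\Rightarrow$ $A\subseteq\cto$ since $M_2^\sharp\in\cto$ (transitivity already suffices here, as $\oR\cap M_2=\oR\cap M_2^\sharp$; the iteration is not needed). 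The paper's proof is more elementary and self-contained; yours imports heavier machinery but yields the sharper conclusion $\oR\cap\CAA\subseteq M_2^\sharp$. One minor correction: your parenthetical ``the $n$-th element along the scale norm with integer parameter $n$'' does not quite work, since the scale norms are ordinal-valued and the order type of $A$ need not be $\omega$; the clean justification for the step ``countable $\Sigma^1_4$ $\Rightarrow$ each member is a lightface singleton'' is Kechris's theorem identifying the largest countable $\Sigma^1_{2n}$ set with the set of $\Delta^1_{2n}$ reals.
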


\begin{proof}
Under our assumption, there is a $\Sigma^1_3$ well-order $<^*$ of the reals of $\CAA$, and the set of reals of $\CAA$ is a countable set  in $V$ (\cite{KMV2}). This well-order is $\Sigma^1_3$-definable both in $\CAA$ and in $V$. Let its length be $\delta$. The set of reals of $\CAA$ is, as the range of $<^*$, a $\Sigma^1_3$ definable set of reals.  By transfinite induction over $\alpha<\delta$ we can show that the $\alpha^{\mbox{\scriptsize th}}$ real of $\CAA$ is in $\cto$. Note that we can quantify over countable sequences of reals in the construction of $\cto$, whether these   sequences and reals are in $\cto$ or not.
\end{proof}

\begin{theorem}\label{subsetsofomega1}
    $\MM^{++}$ implies that subsets of $\omega_1$ that are in $\CAA$ are in $\cto$.
\end{theorem}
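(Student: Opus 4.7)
The plan is to reduce the statement to Theorem~\ref{realsofcaaincto} via a real-coding of subsets of $\omega_1$ that $\MM^{++}$ provides. Since $\MM^{++}$ is strong enough to underwrite the large-cardinal hypotheses used in Theorem~\ref{realsofcaaincto}, every real of $\CAA$ already lies in $\cto$, so the strategy is to show that each $A\subseteq\omega_1$ with $A\in\CAA$ is fully determined inside $\cto$ by a real parameter that itself lies in $\CAA$.

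I would first fix such an $A$. Because $\CAA$ has a canonical definable hierarchy with a $\Sigma_1$-definable global well-order, $A$ is definable in $\CAA$ from some ordinal parameter $\alpha_A$. I would then invoke the Asper\'o--Schindler theorem that $\MM^{++}$ implies Woodin's axiom $(\ast)$, which yields a definable surjection $\mathbb{R}\to\mathcal{P}(\omega_1)$ via Pmax decoding; alternatively, one could appeal directly to the coding of $\mathcal{P}(\omega_1)$ into reals modulo $\NS_{\omega_1}$ that follows from saturation of $\NS_{\omega_1}$ together with $\Sigma^2_1$-absoluteness for stationary set preserving forcing. From $\alpha_A$ one extracts a $\CAA$-canonical real $r_A$ coding $A$; an absoluteness check places $r_A$ in $\CAA$, and Theorem~\ref{realsofcaaincto} then gives $r_A\in\cto$. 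The remaining task is to recover $A$ from $r_A$ inside $\cto$ using only $\mathcal{L}^2_\omega$ resources. The key observation is that stationarity of $S\subseteq\omega_1$ admits an $\mathcal{L}^2_\omega$-characterization---$S$ is stationary iff for every countable first-order algebra on $V_\theta$, for sufficiently large $\theta$, there is a countable elementary subalgebra whose supremum lies in $S$---so the $\Sigma_1$-decoding of $r_A$ over $H_{\omega_2}^V$ can be simulated inside $\cto$ by quantifying over countable subsets of the domain.

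The principal obstacle is bridging the semantic gap between the $\aaq$-quantifier (``for club-many countable subsets''), which underlies the definability of $A$ in $\CAA$, and the plain countable-subset quantifiers that $\mathcal{L}^2_\omega$ provides in $\cto$. Under $\MM^{++}$ this gap can be closed by exploiting the saturation of $\NS_{\omega_1}$ together with $\Sigma^2_1$-absoluteness: the relevant ``club-many'' assertions collapse to ``there exists a countable subset satisfying an $\mathcal{L}^2_\omega$-expressible combinatorial condition.'' Making this collapse fully rigorous---and verifying that the decoding can be carried out level-by-level inside the $J'_\alpha$-hierarchy of $\cto$, with careful parameter management so that the decoding invocations never escape the current $J'_\alpha$---is the technical heart of the argument.
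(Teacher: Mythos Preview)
Your plan has two genuine gaps.

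First, you assert that $\MM^{++}$ ``underwrites the large-cardinal hypotheses used in Theorem~\ref{realsofcaaincto}''. It does not: $\MM^{++}$ can be forced over a model with a single supercompact and no Woodin cardinals above it, so a proper class of Woodin cardinals is simply unavailable. Thus you cannot invoke Theorem~\ref{realsofcaaincto} as stated, and you give no replacement argument that the reals of $\CAA$ lie in $\cto$ from $\MM^{++}$ alone.

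Second, and more seriously, your decoding step does not go through. The $\Pmax$/$\NS_{\omega_1}$-based codings of $\mathcal{P}(\omega_1)$ by reals are decoded \emph{via the nonstationary ideal}, i.e.\ by asking, for various $S\subseteq\omega_1$, whether $S$ is stationary in $V$. That is precisely the $\aaq$-quantifier, not an $\mathcal{L}^2_\omega$ notion. Your proposed $\mathcal{L}^2_\omega$-characterisation of stationarity (``for every countable algebra on $V_\theta$ some countable elementary subalgebra has supremum in $S$'') quantifies over Skolem functions on $V_\theta$ (or, restricted to $\omega_1$, over $f:\omega_1^{<\omega}\to\omega_1$); these are uncountable objects and hence outside the range of the $\mathcal{L}^2_\omega$ second-order variables. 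If stationarity were $\mathcal{L}^2_\omega$-expressible in the way you suggest, one would have $\CAA\subseteq\cto$ outright, contradicting Theorem~2.2 of the paper. The appeal to saturation and $\Sigma^2_1$-absoluteness does not repair this: those tools let you move stationarity facts between models, but they do not turn a $V$-stationarity predicate into a quantification over countable sets.

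The paper proceeds quite differently and avoids expressing stationarity altogether. From $\MM^{++}$ one gets a L\"owenheim--Skolem--Tarski theorem down to $\aleph_1$ for $\CAA$, so any $A\subseteq\omega_1$ in $\CAA$ already lies in $J'_\beta$ for some $\beta<\omega_2^V$. Since $\MM$ gives $\delta^1_2=\omega_2$, there is a real $Y$ with $((\aleph_1^V)^+)^{L[Y]}>\beta$, and then $(J'_\beta)^{L[Y^\sharp]}=J'_\beta$: the single countable oracle $Y^\sharp$ already knows all the relevant $\aaq$-truths up to level $\beta$. Inside $\cto$ one now quantifies over $Y$ and $Y^\sharp$ as \emph{values of bound countable second-order variables} (they need not themselves lie in $\cto$) and recomputes $J'_\beta$, hence $A$. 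The point is not to simulate $\aaq$ in $\mathcal{L}^2_\omega$, but to find one countable witness that has the answers precomputed.
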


\begin{proof} It is a folklore fact that from $\MM^{++}$ we get a L\"owenheim-Skolem-Tarski Theorem down to $\aleph_1$ for $\CAA$ (this is as in \cite{MR506381}).
    In consequence, every subset of $\omega_1$ in $\CAA$ is on the level $J'_\beta$ of $\CAA$ for some $\beta<\omega_2^V$. Since $\MM$ implies $\delta^1_2=\omega_2$ (\cite[Theorem 9.79]{MR1713438}), there is $Y\subseteq\omega$ such that $((\aleph_1^V)^+)^{L[Y]}>\beta$. Now
    $$(J'_\beta)^{L[Y^\sharp]}=J'_\beta.$$
    We do not know whether $Y$ is in $\cto$ but we do not need this  because we can use $Y$ and $Y^\sharp$ as predicates, as values of bound second order variables. We can recover $J'_\beta$, and thereby $X$, inside $\cto$ by means of $Y^\sharp$.
    
    %$\hod^{H_{\omega_2}}$. Now we note that pmax forcing is homogeneous and $\hod^{H_{\omega_2}}\subseteq \hod^{L(\oR)}\subseteq \cto$. So every subset of $\omega_1$ in $\CAA$ is in $\cto$.
\end{proof}

\begin{conjecture} Assuming appropriate large cardinals, we have $\CAA\subset \cto$.
\end{conjecture}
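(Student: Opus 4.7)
The plan is a transfinite induction on the ordinal $\beta$ showing that each level $J'_\beta$ of the hierarchy defining $\CAA$ lies in $\cto$. Limit stages are immediate, and the successor step reduces to computing, inside $\cto$, the $\mathcal{L}(\aaq)$-truth predicate over $J'_\beta$. Granting the inductive hypothesis that $J'_\beta\in\cto$, the one non-routine task is to evaluate the $\aaq$-quantifier over $J'_\beta$. The guiding strategy is to mirror Theorem~\ref{subsetsofomega1} at arbitrary cardinalities: represent $J'_\beta$ by a ``small'' coding object (in Theorem~\ref{subsetsofomega1} this was a real $Y$ together with $Y^\sharp$), and then invoke that coding as the value of a bound second-order variable of $\mathcal{L}^2_\omega$ inside $\cto$ to reconstruct $J'_\beta$ and its $\mathcal{L}(\aaq)$-theory.

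For the coding at arbitrary levels, I would exploit the inner-model-theoretic structure supplied by the large cardinal hypothesis. Assuming a proper class of Woodin cardinals, Theorem~\ref{realsofcaaincto} already places every real of $\CAA$ in $\cto$; the higher levels of $\CAA$ would be coded by iterated mice, for instance by objects of the form $M_n^\sharp(a)$ or more elaborate fine-structural mice, whose existence and iterability are guaranteed by the assumption and whose iteration strategy is definable enough to be reconstructed in $\cto$ through countable-subset quantification. To evaluate the $\aaq$-quantifier on a coded level, which amounts to deciding membership in a club filter on $[J'_\beta]^\omega$, I would use a generic-absoluteness argument in the spirit of the proof of Theorem~\ref{mahlo}: under a Woodin limit of Woodin cardinals, the countable Stationary Tower Forcing $Q_{<\delta}$ reduces questions about clubs on large structures to questions about the canonical countable approximation $s$ supplied by the generic ultrapower, and this approximation is visible to $\cto$ as a value for its bound second-order variables.

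The main obstacle is uniformity across the hierarchy: one needs a single large cardinal framework --- most likely at least a proper class of Woodin limits of Woodin cardinals, and quite possibly something of supercompact strength --- that simultaneously supplies coding objects at every level, the correct generic-absoluteness to compute $\aaq$-truth, and an LST-style reflection (as in Theorem~\ref{subsetsofomega1}) collapsing each $\CAA$-level down to an accessible size. A subtler difficulty is avoiding circularity: the induction uses $\cto$-resources to define $\CAA$-levels, but the coding objects themselves may only be identifiable once some of the $\CAA$-structure is available, so the two hierarchies may need to be recursed on simultaneously, in the spirit of the interleaving with the Chang model implicit in Theorem~\ref{realsofcaaincto}. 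Pinning down the precise large cardinal hypothesis and organizing the simultaneous induction to thread all these compatibilities is where I expect the genuine mathematical content of the conjecture to lie.
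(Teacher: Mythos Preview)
The statement you are addressing is a \emph{conjecture} in the paper, not a theorem: the authors do not provide a proof, and in fact list ``Does $\CAA\subseteq\cto$ follow from large cardinal assumptions?'' among their open questions. So there is no proof in the paper against which to compare your proposal. Your write-up is, appropriately, not a proof either but a research strategy, and you explicitly flag the points where the genuine work remains. In that sense there is no error to correct---you have correctly identified that this is open.

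That said, a few comments on the strategy. The core difficulty you gesture at is real and more severe than your sketch suggests: the $\aaq$-quantifier asks whether a set of countable subsets contains (or is disjoint from) a club in $\mathcal{P}_{\omega_1}(J'_\beta)$, and while each such subset is countable, the club witness itself is an uncountable object that $\mathcal{L}^2_\omega$ cannot directly quantify over. Your idea of using the stationary tower to ``reduce questions about clubs \ldots\ to questions about the canonical countable approximation $s$'' is suggestive, but the generic ultrapower embedding lives in a forcing extension, not in $V$, and it is not clear how a single bound countable-set variable inside $\cto$ can simulate it. The paper's Theorem~\ref{subsetsofomega1} succeeds at $\omega_1$ only because $\MM^{++}$ supplies both an LST collapse to $\aleph_1$ and the fact $\delta^1_2=\omega_2$, which together let a single real $Y$ (with $Y^\sharp$) code the relevant level; no analogue of either ingredient is presently known at higher cardinalities. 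Your proposal to use iterated mice $M_n^\sharp(a)$ for uncountable $a$ is natural, but such mice are not themselves countable objects and so cannot serve directly as values of $\mathcal{L}^2_\omega$ second-order variables. Until one has a mechanism that genuinely expresses ``club-many countable $s$'' via countable-set quantification, the inductive step remains a restatement of the problem rather than a reduction of it.
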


Let $\mathcal{F}(\lambda)$ denote the club filter of $\mathcal{P}_{\omega_1} (\lambda^\omega)$.

\begin{definition}[Woodin \cite{woodin}]
    We say that CM$^+$ \emph{exists} if for all $\lambda$ the following holds. For all $Z \subseteq \mathcal{P}_{\omega_1} (\lambda^\omega)$, if $Z \in L(\lambda^\omega)[\mathcal{F}(\lambda)]$ then either
\begin{enumerate}
    \item $Z \in\mathcal{F}(\lambda)$ or
\item $\mathcal{P}_{\omega_1} (\lambda^\omega)\setminus Z \in \mathcal{F}(\lambda)$.
\end{enumerate}
\end{definition}

\begin{theorem}[Woodin \cite{woodin}]\label{woodin}
   Assume there is a proper class of Woodin limits of Woodin cardinals. Then  CM$^+$ exists. 
\end{theorem}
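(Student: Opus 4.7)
The plan is to run Woodin's countable stationary tower argument at a Woodin limit $\delta$ of Woodin cardinals with $\delta>\lambda$, and to push the resulting generic absoluteness for the Chang model through the predicate $\mathcal{F}(\lambda)$. Fix $\lambda$ and $Z\subseteq\mathcal{P}_{\omega_1}(\lambda^\omega)$ with $Z\in L(\lambda^\omega)[\mathcal{F}(\lambda)]$; then $Z$ is definable in that model by some formula $\varphi$ from a parameter $s\in\lambda^\omega$ and the predicate $\mathcal{F}(\lambda)$.

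First I would force below a suitable condition in the countable stationary tower $Q_{<\delta}$ to obtain a $V$-generic filter $G$ and the associated generic elementary embedding $j\colon V\to M\subseteq V[G]$ with $M^\omega\cap V[G]\subseteq M$, $j(\omega_1^V)=\delta=\omega_1^{V[G]}$, and $a_\ast := j[\lambda^\omega]\in\mathcal{P}_{\omega_1}(j(\lambda)^\omega)^M$. The standard analysis of the tower (see \cite{MR2069032}) yields the equivalence
$$
Z\in\mathcal{F}(\lambda)\iff a_\ast\in j(Z),
$$
so the dichotomy $Z\in\mathcal{F}(\lambda)$ or $\mathcal{P}_{\omega_1}(\lambda^\omega)\setminus Z\in\mathcal{F}(\lambda)$ reduces to showing that the truth value of ``$a_\ast\in j(Z)$'' is determined in $V$, i.e.\ is independent of $G$.

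By elementarity, $a_\ast\in j(Z)$ is equivalent to $L(j(\lambda)^\omega)^M[\mathcal{F}(j(\lambda))^M]\models\varphi(a_\ast,j(s))$. The closure $M^\omega\cap V[G]\subseteq M$ gives $(j(\lambda)^\omega)^M=(j(\lambda)^\omega)^{V[G]}$, hence $L(j(\lambda)^\omega)^M=L(j(\lambda)^\omega)^{V[G]}$. Woodin's generic absoluteness theorem for $L(\mathrm{Ord}^\omega)$ under a proper class of Woodin limits of Woodin cardinals (Corollary~3.1.7 of \cite{MR2069032}, already invoked earlier in this paper) then yields that the theory of this Chang model in the parameters $a_\ast,j(s)$ is the same in $V$, $V[G]$, and $M$.

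The principal obstacle is to upgrade this absoluteness through the predicate $\mathcal{F}(j(\lambda))$, since $\varphi$ mentions the club filter itself. My plan here is to exploit the Woodin cardinals cofinal below $\delta$ to produce a weakly homogeneously Suslin representation of the club filter on $\mathcal{P}_{\omega_1}(j(\lambda)^\omega)$, whose representing trees lie in $V$; the closure of $M$ under $\omega$-sequences from $V[G]$ then ensures that the $j$-images of these trees compute $\mathcal{F}(j(\lambda))^M$ correctly, and homogeneity makes the satisfaction of $\varphi$ in $L(j(\lambda)^\omega)[\mathcal{F}(j(\lambda))]$ invariant under the forcing. The truth value of $a_\ast\in j(Z)$ is thereby a feature of $V$ alone, and exactly one of $Z$ and $\mathcal{P}_{\omega_1}(\lambda^\omega)\setminus Z$ belongs to $\mathcal{F}(\lambda)$. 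This is $\mathrm{CM}^+$.
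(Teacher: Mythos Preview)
The paper does not prove this theorem at all: it is stated as a result of Woodin and simply cited to his unpublished manuscript \cite{woodin}. There is therefore no proof in the paper to compare your attempt against; the authors use CM$^+$ purely as a black box in the subsequent Club Determinacy argument.

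As for your sketch itself, the overall architecture (stationary tower at a Woodin limit of Woodin cardinals, then generic absoluteness for the Chang model) is the right neighbourhood, but there are concrete problems. First, the displayed equivalence $Z\in\mathcal{F}(\lambda)\iff a_\ast\in j(Z)$ is not what the tower gives you: the fundamental lemma says $a_\ast\in j(Z)$ iff $Z\in G$, not iff $Z$ contains a club. If $Z$ is merely stationary you can force below $Z$ and get $a_\ast\in j(Z)$ even though $Z\notin\mathcal{F}(\lambda)$. The intended reduction is rather: assume for contradiction both $Z$ and its complement are stationary, choose generics below each, and obtain opposite truth values for $a_\ast\in j(Z)$; then argue this is impossible. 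Second, your appeal to Corollary~3.1.7 of \cite{MR2069032} for ``the theory of this Chang model in the parameters $a_\ast,j(s)$ is the same in $V$, $V[G]$, and $M$'' does not go through as written: $a_\ast$ and $j(s)$ do not exist in $V$ and vary with $G$, so you cannot invoke forcing absoluteness of a fixed statement with fixed parameters. One has to argue much more carefully, comparing two different generics via a common extension or via a symmetry/homogeneity property of the tower together with a suitable coding of the parameters. Third, the passage you yourself flag as the ``principal obstacle''---pushing absoluteness through the predicate $\mathcal{F}(j(\lambda))$ via a weakly homogeneously Suslin representation---is left as a plan rather than an argument, and it is exactly here that the real work of Woodin's theorem lies.
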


\begin{definition}\label{changplus}
We say that the inner model $C^2(\omega,\aaq)$ satisfies \emph{Club Determinacy}, if every level $(J'_\alpha,\in,{\tr}\rest\alpha)$ of the construction of $C^2(\omega,\aaq)$ satisfies: 
$$\label{cdehto}
(J'_\alpha,\in,{\tr}\rest\alpha)\models
\forall\bx[\aaqs\phi(\bx,\bt,s)\vee
\aaqs\neg\phi(\bx,\bt,s)],
$$ where $\phi(\bx,\bt,s)$ is any formula in $\LL^2(\omega,\aaq)$ and $\bt$ is a finite sequence of countable subsets of $J'_\alpha$. Similarly for $\chaa$. 
\end{definition}

\begin{theorem}Assuming a proper class of Woodin limits of Woodin cardinals, 
    $\ctoaa$ and $\chaa$ satisfy Club Determinacy. 
    %\footnote{Can we make sure $\delta^1_2=\omega_2$?}
\end{theorem}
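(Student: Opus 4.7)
My plan is to leverage Woodin's theorem that CM$^+$ exists (Theorem~\ref{woodin}) to derive Club Determinacy at every level of the construction. Fix a level $\alpha$, a formula $\phi(\bx,\bt,s)\in\LL^2(\omega,\aaq)$ (or in $\LL_{\omega_1\omega}(\aaq)$ for the $\chaa$ case), parameters $\bx\in J'_\alpha$, and a finite tuple $\bt$ of countable subsets of $J'_\alpha$. Choose a cardinal $\lambda$ sufficiently large that $J'_\alpha$ can be coded as a subset of $\lambda$ and the $V$-club filter restricted to $\mathcal{P}_{\omega_1}(J'_\alpha)$ is readable from $\mathcal{F}(\lambda)$. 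Set
$$Z=\{s\in\mathcal{P}_{\omega_1}(\lambda^\omega):(J'_\alpha,\in,\tr\restriction\alpha)\models\phi(\bx,\bt,s^*)\},$$
where $s^*$ is the countable subset of $J'_\alpha$ coded by $s$. The goal is to realize $Z$ as an element of $L(\lambda^\omega)[\mathcal{F}(\lambda)]$, so that CM$^+$ forces either $Z\in\mathcal{F}(\lambda)$ or $\mathcal{P}_{\omega_1}(\lambda^\omega)\setminus Z\in\mathcal{F}(\lambda)$; projecting back to $\mathcal{P}_{\omega_1}(J'_\alpha)$ yields either $\aaqs\phi$ or $\aaqs\neg\phi$ at level $\alpha$, as required.

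The inclusion $Z\in L(\lambda^\omega)[\mathcal{F}(\lambda)]$ is obtained by a transfinite induction on $\beta\le\alpha$ showing that the pair $(J'_\beta,\tr\restriction\beta)$ belongs to $L(\lambda^\omega)[\mathcal{F}(\lambda)]$. Successor stages amount to observing that $\rud_{\tr}(J'_\beta\cup\{J'_\beta\})$ can be computed internally once $J'_\beta$ and $\tr\restriction\beta$ are available; limit stages are taken as unions. The new values of $\tr$ at stage $\beta+\omega$ are computed by evaluating $\LL^2(\omega,\aaq)$-formulas in $(J'_\beta,\in,\tr\restriction\beta)$. Boolean connectives, first-order and $\LL^2_\omega$ quantifiers pose no problem, as $L(\lambda^\omega)[\mathcal{F}(\lambda)]$ contains all countable sequences from $\lambda^\omega$. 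The $\aaq$-quantifier is where $\mathcal{F}(\lambda)$ enters: the $V$-truth of $\aaqs\psi(s)$ over a set of countable subsets definable in $L(\lambda^\omega)[\mathcal{F}(\lambda)]$ is decided by $\mathcal{F}(\lambda)$ together with CM$^+$. The argument for $\chaa$ is essentially identical, with $\LL^2_\omega$ replaced by $\LL_{\omega_1\omega}$; infinitary conjunctions and disjunctions are handled internally because $L(\lambda^\omega)[\mathcal{F}(\lambda)]$ is closed under countable sequences.

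The main obstacle is the precise coordination between the inductive construction of $\tr$ in $V$ and its simulation inside $L(\lambda^\omega)[\mathcal{F}(\lambda)]$: one must verify that every time the $\aaq$-quantifier is evaluated during the construction, the set being quantified over is definable in $L(\lambda^\omega)[\mathcal{F}(\lambda)]$, so that CM$^+$ applies. This is naturally done by strengthening the inductive hypothesis to include both $(J'_\beta,\tr\restriction\beta)\in L(\lambda^\omega)[\mathcal{F}(\lambda)]$ and Club Determinacy at level $\beta$, and propagating both simultaneously through successor and limit stages. A secondary bookkeeping point is choosing $\lambda$ uniformly large, either fixing $\lambda$ larger than $|J'_\alpha|^{\aleph_0}$ at the outset or exploiting the coherence of $\mathcal{F}(\lambda)$ and $\mathcal{F}(\lambda')$ under projection in order to grow $\lambda$ along the induction.
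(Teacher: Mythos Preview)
Your proposal is correct and follows the same line as the paper's proof: reduce Club Determinacy to CM$^+$ by showing that the relevant set of countable substructures lies in $L(\lambda^\omega)[\mathcal{F}(\lambda)]$, then apply Theorem~\ref{woodin}. The paper simply asserts this membership (taking $\lambda=|J'_\alpha|$) in one sentence, whereas you have spelled out the inductive verification---carrying $(J'_\beta,\tr\restriction\beta)\in L(\lambda^\omega)[\mathcal{F}(\lambda)]$ together with Club Determinacy at level $\beta$---that the paper leaves implicit.
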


\begin{proof}We use Theorem~\ref{woodin}. 
    We aim at reducing the statement of Club Determinacy of  $\ctoaa$ to the statement ``CM$^+$ exists" of Definition~\ref{changplus}. To this end, let us assume 
$$
S=\{s\in \Pw_{\omega_1}(J'_\alpha) : (J'_\alpha,\in,{\tr}\rest\alpha)\models\phi(\ba,s,\bt)\},
$$
 where $\ba$ is a finite sequence of elements of $J'_\alpha$ and $\bt$ is a finite sequence of countable subsets of $J'_\alpha$, violates Club Determinacy. We note that $S$ is in $L(\lambda^\omega)[\mathcal{F}(\lambda)]$, where $\lambda=|J'_\alpha|$. This contradicts the assumption ``CM$^+$ exists''.
\end{proof}

\begin{corollary}\label{measurable}
    Assume a proper class of Woodin limits of Woodin cardinals. Then every regular  cardinal of $V$ is measurable in $\ctoaa$.  
\end{corollary}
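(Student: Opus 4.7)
The plan is, for each regular cardinal $\kappa$ of $V$, to extract from Club Determinacy a $\kappa$-complete uniform ultrafilter on $\kappa$ in $\ctoaa$. I would work at a level $(J'_\alpha,\in,\tr\restriction\alpha)$ of the construction with $\kappa\in J'_\alpha$ and define
\[
U_\kappa = \{A\in\mathcal{P}(\kappa)\cap\ctoaa : (J'_\alpha,\in,\tr\restriction\alpha)\models \aaqs(\sup(s\cap\kappa)\in A)\},
\]
where $s$ ranges over countable subsets of $J'_\alpha$. Club Determinacy gives $\aaqs\phi\vee\aaqs\neg\phi$ for every such formula, so exactly one of $A$ and $\kappa\setminus A$ lies in $U_\kappa$; hence $U_\kappa$ is an ultrafilter on $\mathcal{P}(\kappa)\cap\ctoaa$. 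Because the defining condition is recorded in $\tr$, we also have $U_\kappa\in\ctoaa$.

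The easy verifications would go as follows. The ultrafilter $U_\kappa$ is uniform, since for each $\alpha<\kappa$ the regularity of $\kappa$ in $V$ makes $\{s:\sup(s\cap\kappa)>\alpha\}$ contain a club, forcing $\kappa\setminus(\alpha+1)\in U_\kappa$. It is countably complete because the club filter on $\mathcal{P}_{\omega_1}(J'_\alpha)$ is closed under countable intersections, so any countable $\ctoaa$-sequence drawn from $U_\kappa$ has its intersection in $U_\kappa$ as well.

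To obtain full $\kappa$-completeness of $U_\kappa$ within $\ctoaa$, I would apply Club Determinacy once more, this time to the formula $\phi(s)\equiv\forall \xi<\beta(\sup(s\cap\kappa)\in A_\xi)$ with parameter $\langle A_\xi:\xi<\beta\rangle\in\ctoaa$, where $\beta<\kappa$ and each $A_\xi\in U_\kappa$. Club Determinacy asserts that exactly one of $\aaqs\phi$ or $\aaqs\neg\phi$ holds. The first alternative yields $\bigcap_\xi A_\xi\in U_\kappa$ immediately. To rule out the second, I would combine the diagonal intersection of the sets $B_\xi=\{s:\sup(s\cap\kappa)\in A_\xi\}$ (which, by normality of the club filter on $\mathcal{P}_{\omega_1}(J'_\alpha)$, places $s$ in $B_\xi$ for every $\xi\in s\cap\beta$ on a club) with a Fodor-type argument producing a stationary set on which the least witnessing $\xi(s)<\beta$ is constant, contradicting membership of $A_{\xi(s)}$ in $U_\kappa$.

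The main obstacle is precisely this last step: bridging the gap from the $\omega_1$-completeness supplied directly by the club filter to the full $\kappa$-completeness of $U_\kappa$ inside $\ctoaa$. Closing this gap requires careful use of normality of the club filter on $\mathcal{P}_{\omega_1}(J'_\alpha)$ in tandem with Club Determinacy applied to suitably chosen auxiliary formulas, in order to secure $\xi(s)\in s$ on a stationary subset where Fodor's lemma can be invoked.
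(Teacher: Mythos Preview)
Your setup is exactly the paper's: the same filter $U_\kappa$, the same appeal to Club Determinacy for the ultrafilter property, and the observation that $U_\kappa$ is definable from $\tr$ and hence lies in $\ctoaa$. The paper's proof is correspondingly brief and simply refers to \cite{KMV2} for the fact that $U_\kappa$ is $\kappa$-complete and normal.

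Where you diverge is in treating $\kappa$-completeness as the main obstacle and proposing a Fodor argument on $\mathcal{P}_{\omega_1}(J'_\alpha)$. This is both unnecessary and, in the form you sketch, does not close. The point you are missing is that the filter $U_\kappa$, \emph{viewed in $V$}, is nothing exotic: for $A\subseteq\kappa$ one has
\[
\{s\in\mathcal{P}_{\omega_1}(J'_\alpha):\sup(s\cap\kappa)\in A\}\text{ contains a club}\iff A\supseteq D\cap\{\gamma<\kappa:\cf^V(\gamma)=\omega\}\text{ for some club }D\subseteq\kappa.
\]
(The forward direction uses the standard lifting: if $S\subseteq\kappa\cap\text{Cof}(\omega)$ is stationary then $\{s:\sup(s\cap\kappa)\in S\}$ is stationary.) Thus $U_\kappa$ is just the restriction to $\ctoaa$ of the $\omega$-club filter on $\kappa$, which is $\kappa$-complete and normal in $V$ because $\kappa$ is regular. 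For a sequence $\langle A_\xi:\xi<\beta\rangle\in\ctoaa$ with $\beta<\kappa$, the intersection $\bigcap_\xi A_\xi$ lies in this $V$-filter and in $\ctoaa$, hence in $U_\kappa$. No pressing down on $\mathcal{P}_{\omega_1}$ is needed.

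Your Fodor route, by contrast, runs into a real problem: the diagonal intersection you invoke shows that on a club of $s$ one has $\sup(s\cap\kappa)\in A_\xi$ for all $\xi\in s\cap\beta$, so on that very club the least bad index $\xi(s)$ is \emph{not} in $s$. This is the opposite of what you need for Fodor, and the vague appeal to ``auxiliary formulas'' does not indicate how to overcome it.
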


%[Do we get forcing absoluteness from Woodin's result?]

\begin{proof}This is as the proof of Theorem 5.1 in \cite{KMV2}. Fix a regular uncountable $\kappa$.
    For  $\alpha$ big enough for $J'_\alpha$ to contain all subsets of $\kappa$ in $\ctoaa$, consider the 
    $\kappa$-complete normal filter:
$$\F=\{X\subseteq\kappa : X\in J'_\alpha, (J'_\alpha,\in,\tr\rest\alpha)\models \aaqs(\sup(s\cap\kappa)\in X)\}.$$
%This set is clearly in $J'_{\alpha+1}$.
%
Suppose $X\subseteq\kappa$ is in $\ctoaa$.  Since $(J'_\alpha,\in,\tr\rest\alpha)$ satisfies Club Determinacy, it satisfies
$\aaqs(\sup(s\cap\kappa)\in X)$ or $\aaqs(\sup(s\cap\kappa)\notin X).$
If the first, then  $X\in\F$. If the second, then $\kappa\setminus X\in \F$. 
\end{proof}

\begin{lemma}\label{equivCol}
    If there is a proper class of Woodin limits of Woodin cardinals, then for every Woodin limit $\delta$ of Woodin cardinals, $\chaa^V\equiv
    \chaa^{V^{\Col(\omega,<\delta)}}.$ 
\end{lemma}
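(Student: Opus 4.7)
The strategy is to reduce the statement to the set-generic absoluteness of the theory of the Chang model $C_{\omega_1\omega}$ (which was cited earlier as Woodin's answer to Q3), by showing that the construction of $\chaa$ at every stage is determined by information that is preserved by $\Col(\omega,<\delta)$-forcing: namely $C_{\omega_1\omega}$ together with the club filter $\mathcal{F}(\lambda)$ on $P_{\omega_1}(\lambda^\omega)$, as regulated by CM$^+$ (Theorem~\ref{woodin}).

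First I would verify that the hierarchy $(J'_\alpha)$ defining $\chaa$ can be computed uniformly inside the enriched structure $(C_{\omega_1\omega},\in,\mathcal{F})$, where $\mathcal{F}$ codes the family of club filters $\mathcal{F}(\lambda)$. The only non-obvious clause is truth-evaluation for the $aa$-quantifier: at stage $\alpha$, $(J'_\alpha,\in,\tr\rest\alpha)\models\aaqs\phi(s,\ba)$ iff the set $S=\{s\in P_{\omega_1}(J'_\alpha):\phi(s,\ba)\}$ lies in the club filter on $P_{\omega_1}(J'_\alpha)$. Setting $\lambda=|J'_\alpha|$, the set $S$ lies in $L(\lambda^\omega)[\mathcal{F}(\lambda)]$ because $J'_\alpha$ and $\tr\rest\alpha$ are obtained by a transfinite recursion whose previous stages are already inside $L(\lambda^\omega)[\mathcal{F}(\lambda)]$. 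By CM$^+$ either $S$ or its complement belongs to $\mathcal{F}(\lambda)$, so the $aa$-evaluation is a first-order predicate over $(L(\lambda^\omega),\in,\mathcal{F}(\lambda))$, and a straightforward induction shows that the whole construction of $\chaa$ is first-order over $(C_{\omega_1\omega},\in,\mathcal{F})$.

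Second, the hypothesis ``there is a proper class of Woodin limits of Woodin cardinals'' is preserved by $\Col(\omega,<\delta)$, so CM$^+$ and Woodin's Chang-model absoluteness both hold in $V$ and in $V^{\Col(\omega,<\delta)}$. I would then argue that the first-order theory of the augmented structure $(C_{\omega_1\omega},\in,\mathcal{F})$ is invariant under set forcing: the pure Chang-model theory is invariant by Larson \cite{MR2069032}, and CM$^+$ reduces the ``extra'' information carried by $\mathcal{F}$ to a level-by-level dichotomy between $S$ and its complement inside the Chang model, which is itself set-generically absolute. Since $\chaa$ is uniformly first-order definable over this augmented structure, the theory of $\chaa^V$ and the theory of $\chaa^{V^{\Col(\omega,<\delta)}}$ must agree.

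The principal obstacle is the second step: the raw objects $\mathcal{F}(\lambda)^V$ and $\mathcal{F}(\lambda)^{V[G]}$ are genuinely different (for instance, in $V[G]$ the parameter $\delta$ has become countable, which radically changes $P_{\omega_1}(\lambda^\omega)$). What must be shown is that the only information about $\mathcal{F}$ used in the $\chaa$-construction is the CM$^+$ dichotomy on definable sets, and that this dichotomy transfers coherently between the two universes. Once this is established, the conclusion follows by a level-by-level induction along the $J'_\alpha$-hierarchy, matching $\mathcal{L}_{\omega_1\omega}(\aaq)$-truths on the $V$ side with those on the $V^{\Col(\omega,<\delta)}$ side, in the spirit of the use of Lemma~\ref{chang} to handle $\cto$.
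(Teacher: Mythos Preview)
Your plan and the paper's proof diverge at the crucial technical point, and the gap you flag as ``the principal obstacle'' is real and is not closed by what you write.

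The paper does \emph{not} try to show that the theory of any augmented Chang structure $(C_{\omega_1\omega},\in,\mathcal{F})$ is generically absolute. Instead it uses the countable stationary tower at $\delta$ to produce $j:V\to M\subseteq V[G]$ with ${}^\omega M\subseteq M$. Elementarity of $j$ gives $\chaa^V\equiv\chaa^M$ for free; the work is to show $\chaa^M=\chaa^{V[T]}$ for some $\Col(\omega,<\delta)$-generic $T$. That step is a level-by-level induction using Club Determinacy (the consequence of CM$^+$ proved just before) together with the fact that $M$, $V[G]$, $V[G][H]$, and $V[T]$ all have the same countable sequences of ordinals, where $H$ shoots a club through the $V$-regular cardinals below $\delta$ and $T$ is manufactured from $H$ as in the proof of Theorem~\ref{mahlo}. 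Homogeneity of the collapse then handles an arbitrary generic.

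Your approach, by contrast, tries to pin $\chaa$ down as a definable object over $(C_{\omega_1\omega},\in,\mathcal{F})$ and then invoke absoluteness of that structure's theory. The difficulty is exactly the one you name: you have absoluteness of the \emph{pure} Chang model (Larson), and you have CM$^+$ in both $V$ and $V[G]$ separately, but neither of these says that the club-filter predicate resolves a given definable $S$ the \emph{same way} in $V$ and in $V[G]$. After collapsing, $P_{\omega_1}(\lambda^\omega)$ and $\mathcal{F}(\lambda)$ change drastically (indeed $\lambda$ may become countable), so ``$S\in\mathcal{F}(\lambda)$'' is not the same question in the two universes; the sentence ``CM$^+$ reduces the extra information\ldots which is itself set-generically absolute'' is asserted rather than argued. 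Without a device that links the two club filters---which is precisely what the stationary-tower embedding supplies---this coherence is the whole lemma, not a detail to be filled in. In short, you have identified the right invariant (Club Determinacy via CM$^+$) but are missing the mechanism (stationary tower plus the club-of-regulars trick) that transports it across the forcing.
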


\begin{proof}
    Let  $G$ be a generic for the countable stationary tower forcing at $\delta$. Let $j:V\to M\subseteq V[G]$ be the associated elementary embedding. Here ${}^\omega M\subseteq M$. By Theorem~\ref{woodin} the model $\chaa^V$ satisfies Club Determinacy. Hence also $M$ satisfies Club Determinacy. 
    
    \smallskip

    \noindent{\bf Claim 1:} $\chaa^M=\chaa^{V[G]}$.
    \smallskip

    Proof of the claim: We use induction on the construction of $\chaa$, appealing to  Club Determinacy and ${}^\omega M\subseteq M$ on the way. As in \cite[Theorem 4.12]{KMV2}, proving $\chaa^M=\chaa^{V[G]}$ boils down to proving the following: If $N\in \chaa^M$, $(a\in{}^\omega M)^M$, and $\phi(x)$ is in $\mathcal{L}_{\omega_1\omega}(\aaq)\cap M$, then
    \begin{equation}
        M\models``N\models\phi(a)"\iff V[G]\models``N\models\phi(a)"
    \end{equation}
    We use induction on $\phi(x)$. The only case needing attention is the case that $\phi(x)$ is $\aaqs\psi(s,x)$, and the claim holds for $\psi(s,x)$ already. Suppose first $M\models``N\models\aaqs\psi(s,a)"$. Let $C\in M$ be a club of countable subsets $P$ of $N$ in $M$ such that $M\models``(N,P)\models\psi(P,a)"$. By induction hypothesis, for all $P\in C$, $V[G]\models``(N,P)\models\psi(P,a)"$ and $C$ is club in $V[G]$, too. Hence $V[G]\models``N\models\aaqs\psi(s,a)"$. On the other hand, assume $M\not\models``N\models\aaqs\psi(s,a)"$. By Club Determinacy, $M\models``N\models\aaqs\neg\psi(s,a)"$. Let $C\in M$ be a club of countable subsets $P$ of $N$ in $M$ such that $M\models``(N,P)\models\neg\psi(P,a)"$. By induction hypothesis, for all $P\in C$, $V[G]\models``(N,P)\models\neg\psi(P,a)"$. Hence $V[G]\models``N\models\aaqs\neg\psi(s,a)"$, and $V[G]\not\models``N\models\phi(a)"$ follows.
    Claim 1 is proved.

    Let $S$ be the set of regular cardinal $<\delta$ in $V$. The set $S$ is stationary, as $\delta$ is Mahlo. By \cite{KMV2}, the set $S$ is still stationary in $V[G]$. Of course, in $V[G]$ it is a set of countable ordinals. Let $H$ be generic over $V[G]$ for the standard po-set for forcing a club subset for $S$. Note that $M, V[G]$ and $V[G][H]$ have the same countable sequences of ordinals. We can use $H$, as in the proof of Theorem~\ref{mahlo}, to construct a $\Col(\omega,<\!\!\delta)$-generic $T$ over $V$ such that $V[T]$ and $V[G][H]$ have the same countable sets of ordinals. 

    \smallskip

    \noindent{\bf Claim 2:} $\chaa^M=\chaa^{V[T]}$.
    \smallskip

    Proof of the claim: We use induction on the construction of $\chaa$, appealing to  Club Determinacy. The proof is like the proof of Claim 1.
    Claim 2 is proved. 
    
    We now return to the proof of Lemma~\ref{equivCol}:  The stationary tower embedding $j:V\to M$ induces $j:\chaa^V\to \chaa^M=\chaa^{V[T]}$. Finally, we note that the po-set $\Col(\omega,<\!\!\delta)$ is homogeneous and therefore it does not matter which generic we use.
\end{proof}

\begin{theorem}[Woodin \cite{woodin}]
    Suppose there is a proper class of Woodin limits of Woodin cardinals. Suppose $\oP$ is a forcing notion and $G$ is $\oP$-generic over $V$. Then $\chaa^V\equiv\chaa^{V[G]}$.
\end{theorem}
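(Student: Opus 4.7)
The plan is to reduce to Lemma~\ref{equivCol} by sandwiching $V$ and $V[G]$ between common Levy collapse extensions. First, since Woodin cardinals, and hence Woodin limits of Woodin cardinals, are preserved by small forcing, I can pick a $\delta$ which is a Woodin limit of Woodin cardinals in $V$ with $|\oP|<\delta$; such $\delta$ remains a Woodin limit of Woodin cardinals in $V[G]$ as well. Applying Lemma~\ref{equivCol} inside $V$ yields $\chaa^V\equiv\chaa^{V[K]}$ for any $\Col(\omega,<\!\delta)$-generic $K$ over $V$, and applying Lemma~\ref{equivCol} inside $V[G]$ yields $\chaa^{V[G]}\equiv\chaa^{V[G][H]}$ for any $\Col(\omega,<\!\delta)$-generic $H$ over $V[G]$.

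Next, by the universality property (Kripke embedding theorem) of the Levy collapse, already invoked in the proof of Theorem~\ref{mahlo}, the assumption $|\oP|<\delta$ ensures that the two-step iteration $\oP\ast\dot\Col(\omega,<\!\delta)$ is forcing equivalent over $V$ to $\Col(\omega,<\!\delta)$. Fixing any $H$ which is $\Col(\omega,<\!\delta)$-generic over $V[G]$, the combined generic $G\ast H$ therefore corresponds to a $\Col(\omega,<\!\delta)$-generic $K$ over $V$ with $V[K]=V[G][H]$. In particular $\chaa^{V[K]}=\chaa^{V[G][H]}$, and chaining the three relations yields
$$\chaa^V\ \equiv\ \chaa^{V[K]}\ =\ \chaa^{V[G][H]}\ \equiv\ \chaa^{V[G]},$$
which is the desired conclusion.

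The main technical obstacle is justifying the absorption step with enough precision for both appeals to Lemma~\ref{equivCol} to apply to the very same model $V[G][H]=V[K]$. The homogeneity of $\Col(\omega,<\!\delta)$, which is what Lemma~\ref{equivCol} uses to ensure that the theory of $\chaa^{V[K]}$ does not depend on the choice of $K$, is exactly what transfers here: Claim~2 of that lemma constructs $\chaa$ of the extension from $\omega$-sequences of ordinals, and these countable sequences are shared between $V[G][H]$ and $V[K]$, so the induction on the $\chaa$-hierarchy (driven by Club Determinacy, supplied by Theorem~\ref{woodin}) produces the same inner model in both presentations, with no new work beyond that lemma.
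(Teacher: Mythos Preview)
Your proof is correct and follows essentially the same approach as the paper: pick a Woodin limit of Woodin cardinals $\delta>|\oP|$, apply Lemma~\ref{equivCol} in both $V$ and $V[G]$, and use Levy-collapse absorption to identify $V[G][H]$ with some $V[K]$. Your final paragraph over-thinks a trivial point---once $V[G][H]=V[K]$ literally as models, their $\chaa$'s are equal with no further argument needed, so there is no need to invoke Club Determinacy or the internal structure of the proof of Lemma~\ref{equivCol}.
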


\begin{proof}
    Suppose $\oP$ is a forcing notion and $H$ if $\oP$-generic over $V$. Let $\delta$ be a Woodin limit of Woodin cardinals such that $\delta>|\oP|$. Still, $\delta$ is Woodin in $V[H]$ and  $V[H]$ still has a proper class of Woodin limits of Woodin cardinals. Let $G$ be $\Col(\omega,<\delta)$-generic over $V[H]$.  By Lemma~\ref{equivCol}, $\chaa^V\equiv
    \chaa^{V[G]}$. Similarly,  
    $\chaa^{V[H]}\equiv\chaa^{V[H][G]}$. By well-known properties of the Levy-collapse, there is a $\Col(\omega,<\delta)$-generic $G'$ over $V$ such that $V[H][G]=V[G']$. Hence by the homogeneity of the collapse, $\chaa^V\equiv\chaa^{V[H]}$.
\end{proof}

    \begin{corollary}
       Suppose there is a proper class of Woodin limits of Woodin cardinals. Then the theory of $\ctoaa$ cannot be changed by forcing. 
    \end{corollary}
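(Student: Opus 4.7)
The plan is to reduce the statement to the preceding theorem by showing that $\ctoaa$ is uniformly definable inside $\chaa$. Once this is established, every sentence about $\ctoaa$ translates to a sentence about $\chaa$, and Woodin's forcing-invariance of $\Th(\chaa)$ transfers immediately.

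First, I would mirror the argument of Lemma~\ref{chang} to prove $(\ctoaa)^V = (\ctoaa)^{\chaa^V}$; that is, the $J'_\alpha$ hierarchy used to build $\ctoaa$ coincides whether executed in $V$ or in $\chaa^V$. The induction on $\alpha$ reduces to verifying that, for any $\phi(\bx,\bt,s) \in \mathcal{L}^2_\omega(\aaq)$ with parameters $\ba \in J'_\alpha$ and countable second-order parameters $\bt$, satisfaction of $\phi$ over $(J'_\alpha,\in,\Tr\rest\alpha)$ is absolute between $V$ and $\chaa^V$. The bounded-quantifier cases and the countable second-order quantifiers pose no difficulty because $\chaa^V$ is closed under countable sequences and therefore contains exactly the same countable subsets of $J'_\alpha$ as $V$ does.

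The main obstacle is the $\aaq$ quantifier, and this is where the Club Determinacy established earlier in the paper becomes crucial. Since a proper class of Woodin limits of Woodin cardinals yields CM$^+$ and hence Club Determinacy for $\chaa$, for any relevant formula $\psi$ and parameters $\chaa^V$ decides one of $\aaq s\,\psi$ or $\aaq s\,\neg\psi$. In either case a witnessing $\chaa^V$-club of countable subsets of $J'_\alpha$ remains $\omega$-closed and unbounded in $V$, because $\chaa^V$ and $V$ agree on $[J'_\alpha]^{\leq\omega}$; hence it is still a club in $V$. This is exactly the style of induction executed in Claim~1 of Lemma~\ref{equivCol}, and it shows that ``$\ctoaa \models \sigma$'' is, uniformly in $\sigma$, expressible in the first-order theory of $\chaa$.

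Finally, given a set-forcing notion $\oP$ and a $\oP$-generic $G$ over $V$, the hypothesis ``proper class of Woodin limits of Woodin cardinals'' persists in $V[G]$, so the same argument yields $(\ctoaa)^{V[G]} = (\ctoaa)^{\chaa^{V[G]}}$ with the identical translation scheme. Combining this with the preceding theorem $\chaa^V \equiv \chaa^{V[G]}$, we conclude $\ctoaa^V \equiv \ctoaa^{V[G]}$, as required.
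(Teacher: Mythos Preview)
Your strategy---reduce to Woodin's theorem for $\chaa$ by showing $(\ctoaa)^{\chaa^V}=(\ctoaa)^V$---is natural, but the treatment of the $\aaq$ quantifier has a real gap. You correctly observe that a $\chaa^V$-club of countable subsets of $J'_\alpha$ is still a club in $V$; this yields the implication $\chaa^V\models\aaq s\,\psi\ \Rightarrow\ V\models\aaq s\,\psi$. For the converse you write that Club Determinacy makes ``$\chaa^V$ decide one of $\aaq s\,\psi$ or $\aaq s\,\neg\psi$,'' and then take a witnessing $\chaa^V$-club. But the Club Determinacy theorem in the paper asserts the dichotomy with $\aaq$ interpreted in $V$, not in $\chaa^V$; it gives you a $V$-club, not a $\chaa^V$-club. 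To complete your argument you would need that every set $Z\in\chaa^V$ which contains a $V$-club already contains a club belonging to $\chaa^V$---equivalently, that the club filters of $V$ and of $\chaa^V$ coincide on $\chaa^V$. This is precisely where the analogy with Lemma~\ref{chang} breaks down (there the logic has no $\aaq$), and you have not supplied it. Note that in Claim~1 of Lemma~\ref{equivCol} the corresponding step works because Club Determinacy transfers from $V$ to $M$ via the elementary embedding $j$; no such elementarity is available between $V$ and $\chaa^V$.

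The paper supplies no proof for the corollary, and the intended reading is almost certainly that the proofs of Lemma~\ref{equivCol} and of the preceding theorem go through verbatim with $\ctoaa$ in place of $\chaa$. Club Determinacy was established for $\ctoaa$ in the same theorem as for $\chaa$; it transfers to $M$ via $j$; and the inductions in Claims~1 and~2 then proceed identically, the countable second-order quantifiers of $\mathcal{L}^2_\omega$ being absolute between $M$ and $V[G]$ by ${}^\omega M\subseteq M$, just as the countable conjunctions of $\mathcal{L}_{\omega_1\omega}$ are. This direct route never compares the club filter of $V$ with that of an inner model, and so avoids the difficulty in your argument.
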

    
This raises, among other things, the question whether a proper class of Woodin limits of Woodin cardinals decides CH in $\ctoaa$.

\section{The inner model $\hod_1$.\label{hod-one}}

Recall that $\hod_1$ is the inner model $C(\Sigma^1_1)$, where $\Sigma^1_1$ is the  fragment of second order logic consisting of formulas of the form
$\exists R_1\ldots\exists R_n\phi,$
where $\phi$ is first order and $R_1,\ldots,R_n$ are second order relation variables. It was proved in \cite{MR4290501} that the equation $\hod=\hod_1$ is independent of ZFC. We refine this result here by showing that adding large cardinal axioms to ZFC does not help to decide $\hod=\hod_1$.

\begin{theorem}\label{hodhod1}
The statement $\hod=\hod_1$ is independent from ZFC plus ``There is a supercompact  cardinal".
\end{theorem}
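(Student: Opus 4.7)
The plan is to prove both halves of the independence over ZFC $+$ ``there is a supercompact cardinal''. Fix a model $V$ of ZFC containing a supercompact cardinal $\kappa$.

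For the consistency of $\hod = \hod_1$ together with a supercompact, first apply a Laver preparation so that the supercompactness of $\kappa$ becomes indestructible under $<\!\kappa$-directed closed forcing. Then perform a class forcing that makes $V = \hod$ in the extension, for instance a version of Reitz's forcing coding every set of $V$ into the GCH pattern on a class of cardinals above $\kappa$, or equivalently Jensen-style coding into a club of ordinals. The forcing is arranged to factor above $\kappa$ as a $<\!\kappa$-directed closed iteration so that $\kappa$ remains supercompact in $V[G]$. In $V[G]$ every set is ordinal definable, hence $V[G] = \hod^{V[G]}$; to conclude $\hod_1^{V[G]} = V[G]$ as well, one uses that the coding predicate is first-order definable, and in particular $\Sigma^1_1$-definable, from ordinals, which lets the $C(\Sigma^1_1)$ hierarchy reproduce every set in $V[G]$ level by level.

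For the consistency of $\hod \neq \hod_1$ together with a supercompact, invoke the construction from \cite{MR4290501} that separates $\hod$ from $\hod_1$ over ZFC alone. If that construction proceeds via a set-sized forcing, apply it to $V$ at a size below $\kappa$; Levy--Solovay preservation then yields $\kappa$ supercompact in the extension while leaving the separation argument unaffected. If the construction is via class forcing, instead localize it above $\kappa$, again using a $<\!\kappa$-directed closed iteration following a Laver preparation, so that Laver indestructibility preserves supercompactness.

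The main obstacle is the verification in the first half that the forcing extension satisfying $V = \hod$ also satisfies $V = \hod_1$: one must argue, by induction on the levels $(J'_\alpha)$ of $C(\Sigma^1_1)$ in the extension, that each set appearing in $V[G]$ shows up at some level of the hierarchy, using the ordinal-definable well-order supplied by the coding as the essential $\Sigma^1_1$-definable resource. A secondary, more technical obstacle is to present the coding forcing in a form compatible with Laver's indestructibility theorem, so that no stronger preservation result is required; for the separation half, the parallel issue is to check that the specific witness of $\hod \neq \hod_1$ from \cite{MR4290501} survives in the presence of the supercompact, which for a set-sized separating forcing is immediate from Levy--Solovay but requires a direct factoring argument in the class-forcing case.
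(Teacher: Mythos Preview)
Your first half is broadly fine, though the paper takes a shorter route: it simply cites Menas's theorem that $V=\hod$ is consistent with a supercompact, and observes that in Menas's model every set of ordinals is coded by the continuum-function pattern $\{\alpha : 2^{\aleph_\alpha}=\aleph_{\alpha+3}\}$, which is already first-order (hence certainly $\Sigma^1_1$) definable over any level of the hierarchy. Your Laver-plus-Reitz approach should also work, but the ``main obstacle'' you flag is not really an obstacle once the coding is by the GCH pattern.

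The second half, however, has a real gap. The separation construction in \cite{MR4290501} is carried out over $L$, and you cannot simply ``apply it at a size below $\kappa$'' to an arbitrary ground model with a supercompact. The argument needs two ingredients you do not supply. First, a \emph{preparation}: the paper starts from a model with a supercompact $\delta$ and $V=\hod_1$, arranged so that every set of ordinals is coded unboundedly often in the continuum function; this guarantees $V\subseteq(\hod_1)^{V^{\oP}}$ for every set forcing $\oP$, which is what later pins down $(\hod_1)^{V_2}$. Second, the \emph{sandwich argument}: after adding a Cohen real $a$ (as part of a homogeneous product $\oP^*$) to get $V_1$, one kills weak compactness of the $n$-th weakly compact $\kappa_n$ exactly for $n\in a$ via Kunen's Souslin-tree forcing, obtaining $V_2$; then forcing branches through those trees gives $V_3$, which is again an extension of $V$ by $\oP^*$. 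Upward persistence of $\Sigma^1_1$ gives $(1)\Rightarrow(2)\Rightarrow(3)$ for truth in $V_1,V_2,V_3$, and homogeneity of $\oP^*$ closes the loop $(3)\Rightarrow(1)$. This is how one shows $(\hod_1)^{V_2}=(\hod_1)^{V_1}=V$, so that $a\in\hod^{V_2}\setminus\hod_1^{V_2}$. Your proposal mentions none of this machinery; Levy--Solovay only tells you $\delta$ stays supercompact (which the paper also uses, since the whole separating forcing lives below $\delta$), but it says nothing about why $\hod_1$ fails to capture $a$. Your alternative of localizing the construction \emph{above} $\kappa$ via Laver indestructibility is also problematic as stated, since the separating forcing begins by adding a Cohen real and is not $<\!\kappa$-directed closed.
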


\begin{proof}
In \cite{MR4290501} this was proved by forcing over $L$. Naturally the question arises whether  some large cardinal can imply a definite answer to this question. We refine the result from \cite{MR4290501} by showing that for many large cardinals axioms do not settle this problem. For concreteness we choose to show it for 
supercompact cardinals, but the  method of proof applies to many large cardinals notions. Essentially we need that, consistently with the given large cardinal, the universe can be coded by the function $\lambda\rightarrow 2^\lambda$.  The proof follows closely the argument of \cite[Section 7]{MR4290501}.

We show that both $\hod=\hod_1$ and $\hod\neq \hod_1$ are consistent with a supercompact cardinal, assuming the consistency of the latter.

$\hod=\hod_1$: The proof of the consistency of $V=\hod$ with a supercompact cardinal (\cite{MR0540771}) gives the same result for $V=\hod_1$ as in the model of \cite{MR0540771} every set of ordinals is coded by the class of ordinals $\alpha$ such that $2^{\aleph_\alpha}=\aleph_{\alpha+3}$ which is captured by $\hod_1$.

$\hod\neq \hod_1$: The proof follows directly the proof of \cite[Theorem 7.6]{MR4290501}. Using  \cite[Theorem 21]{MR0540771},  we can assume that $V=\hod_1$ and that we have cardinal $\delta$ which is supercompact. Also note that if in the coding process of \cite{MR0540771} we make sure to code every set of ordinals unboundedly many times, we get that  the model $V$ is a definable subclass of $(\hod_1)^{V^\oP}$ for every forcing extension of $V$ by a (set) forcing notion $\oP$. Here, by ``definable'' we mean ``definable with no parameters''.

Let $\kappa_n$, $n<\omega$, be the smallest weakly compact cardinals in $V$ and let $\kappa=\sup_{n<\omega}\kappa_n$. For $n<\omega$ let $\oP_n$ be the reverse  Easton support iteration of adding one Cohen subset to any inaccessible in the interval $(\kappa_{n-1}, \kappa_n]$, where we interpret $\kappa_{-1}$ as $0$. Let $\oP^\ast$ be the full support product of the $\oP_n$'s for $n<\omega$. Let $V_1$ be the forcing extension of $V$ by $\oP^\ast$. Note that each $\kappa_n$, $n<\omega$, are still weakly compact in $V_1$. Note by our assumption about $V$ that $(\hod_1)^{V_1}=\hod ^{V_1}=V$. This follows since $V=(\hod)^{V_1}$, $V_1$ is an extension of $V$ by a homogeneous forcing, and $V\subset (\hod_1)^{V_1}$. Also note that $\oP_0$ introduced into $V_1$ a Cohen real $a$.

For each $n<\omega$, let $\oT_n$ in $V^{\oP_n}$ be the forcing that kills the weak compactness of $\kappa_n$ by adding a Souslin tree on it (see \cite{MR495118}). We force over $V_1$  with $\prod_{n\in a} \oT_n$, killing the weak compactness of $\kappa_n$ for $n\in a$. Denote the resulting model by $V_2$. Let $\oT_n^\ast$ be the resulting Soulin tree.  Note that $\kappa_n$ for $n\not\in a$ are still weakly compact, since $\kappa_n$ is weakly compact in $V^{\oP_n}$ and $V_2$ is a an extension of $V^{\oP_n}$ by a forcing which is a product of a forcing of size less than $\kappa_n$ and a forcing which is in $V$ and is  $\kappa_n^+$-strategically closed in $V$. Also, as in \cite{MR495118}, for $n\in a$, $P_n\ast \oT^\ast_n$ is forcing equivalent to $\oP_n$. Hence in $V_2$ the cardinal $\kappa_n$ is weakly compact iff $n\not\in a$. Since, by assumption $V\subseteq (\hod)^{V_2}$, we have  
$\{\kappa_n|n<\omega\}\in (\hod)^{V_2}$.  Hence $a\in (\hod)^{V_2}$.

We claim that $V_2$ is our model of a supercompact cardinal with $\hod\neq \hod_1$. We can immediately observe that $\delta$ is still supercompact in $V_2$, as we forced over $V$ by a forcing of size less that $\delta$. We shall obtain our result if we show that $a\not\in (\hod_1)^{V_2}$, so in $V_2$ we have $a\in \hod-\hod_1$. This follows from: 

\begin{claim}
$(\hod_1)^{V_2}=(\hod_1)^{V_1}=V$.
\end{claim}

In order to prove the claim,we force over $V_2$  with the full support product of $\oT^\ast_n$ for $n\in a$. Let $V_3$ be the resulting model. Note that $V_3$ is also forcing extension of $V$ by $\oP^\ast$.  As in \cite[Theorem 7.6]{MR4290501}, we show by induction on the levels that all the levels of the construction of $\hod_1$ are the same in $V_d$ for $d= 1,2,3$ and all these levels belong to $V$.  This will follow from:

\begin{subclaim} Let $M\in V$, Let $\Phi(\vec{x})$ be a $\Sigma^1_1$ formula. Let $\vec{b}\in M$ then the following are equivalent
\begin{enumerate}
  \item $ (M\models\Phi(\vec{b}))^{V_1}$ 
  \item $ (M\models\Phi(\vec{b}))^{V_2}$ 
  \item $ (M\models\Phi(\vec{b}))^{V_3}$ 
\end{enumerate}

\end{subclaim}
 \begin{proof} [subclaim]

By the persistency of existential second order formulas in extensions of models of set theory, $(1)\rightarrow (2) \rightarrow (3)$. So we are left with showing $(3)\rightarrow (1)$.
Suppose that (3) holds. $V_3$ is an extension of $V$ by $\oP^\ast$. If $V_3\models ``M\models \Phi(\vec{b})$'', then, since $\oP^\ast$ is a homogeneous forcing notion and $M,\vec{b}\in V$ , every condition in $\oP^\ast$ forces that $M\models \Phi(\vec{b})$. $V_1$ is also an extension of $V$ by $\oP^\ast$, so $V\models ``M\models \Phi(\vec{b})$'', which is item (1) above. The Subclaim, and thereby the Claim are proved.
\end{proof}

 Theorem~\ref{hodhod1} is proved. \end{proof}

\section{Open questions}

\begin{enumerate}
    \item Is $\omega_1^V$  measurable
in $C^2(\omega)$, under large cardinal assumptions? The answer is positive in a strong sense for both $\CAA$ and $\ctoaa$.
 \item Is every uncountable regular cardinal of $V$ strongly Mahlo
in $C^2(\omega)$, under large cardinal assumptions? The answer is positive if $C^2(\omega)$ is replaced by  $C^*$,  $\CAA$, or $\ctoaa$.
    \item Does $\CAA\subseteq\cto$ follow from large cardinal assumptions? The answer is positive for reals. It is positive also for subsets of $\omega_1^V$, assuming $\MM^{++}$.
    \item Does $\cto$ satisfy $\GCH$, under large cardinal assumptions? The answer is positive for $\CAA$ up to $\omega_1^V$ (\cite{KMV2}), and also in general by a result of Goldberg and Steel \cite{GS}.
    \item What large cardinals can exist in $\cto$? Under appropriate large cardinal assumptions, there are higher measurable cardinals in $\CAA$ by an unpublished result of Goldberg and Rajala.
    \item Given $n$, is $(\neg\CH)^{\cto}$  consistent with $n$ Woodin cardinals, assuming the existence of $M_n$? 
    \item Does $\cto\ne\ctop$ follow from large cardinals?
    
\end{enumerate}

\bibliographystyle{plain}
\bibliography{Magidor_Vaananen_C2_omega}

\end{document}